\titleformat*{\section}{\Large\bfseries}
\titleformat*{\subsection}{\bfseries}
\titleformat*{\subsubsection}{\bfseries}
\titleformat*{\paragraph}{\bfseries}
\newcommand{\cR}{{\cal R}}
\newcommand{\mN}{\mathbb{N}}
\newcommand{\mR}{\mathbb{R}}
\newcommand{\bY}{\mathbf{Y}}
\newcommand{\Var}{{\rm Var}}
\theoremstyle{plain}
\newtheorem{theorem}{Theorem}[section]
\theoremstyle{definition}
\newtheorem{definition}[theorem]{Definition}
\theoremstyle{remark}
\newtheorem{remark}[theorem]{Remark}
\DeclareMathOperator*{\argmax}{arg\,max}
\newcommand{\one}{{\mathbbm{1}}}
\def\@maketitle{%
  \newpage
  \null
  \vskip 2em%
  \begin{center}%
  \let \footnote \thanks
    {\Large\bfseries \@title \par}%
    \vskip 1.5em%
    {\normalsize
      \lineskip .5em%
      \begin{tabular}[t]{c}%
        \@author
      \end{tabular}\par}%
    \vskip 1em%
    {\normalsize \@date}%
  \end{center}%
  \par
  \vskip 1.5em}
\numberwithin{equation}{section}
\begin{document}

\title{Algorithm for overlapping estimation of common change-sets in spatial data of fixed size}

\author{Leonid Torgovitski\thanks{E-mail: \texttt{ltorgovi@math.uni-koeln.de}} \thanks{Research partially supported by the Friedrich Ebert Foundation.}}
\affil{Mathematical Institute, University of Cologne\\ Weyertal 86-90, 50931, Cologne, Germany}
\date{}

\maketitle

\abstract{We propose a flexible class of estimates for ``common change in the mean'' sets in spatio-temporal data. We rely on a scan type approach by subdividing the spatial observations into suitable overlapping regions to which classical CUSUM  (cumulative sums) estimates may then be applied separately. The aggregated ``local'' estimates are used to construct consistent ``global''  estimates of the change set(s) by taking the overlapping structure into account. The domain and the change regions may have irregular shapes and the suggested procedure is especially suited for estimation of multiple change regions. The performance is demonstrated in a simulation study.}

\subsubsection*{Keywords}
Change-set estimation, Spatio-temporal data, Fixed time, Common change-sets, Multiple changes
 

\section{Introduction}\label{sec:introduction}

The estimation of common change points (e.g. points which correspond to changes of the mean) in the framework of multivariate time series ~$X_1,X_2,\ldots,X_N\in\mR^d$, with fixed sample size ~$N$ ~and an increasing dimension ~$d\rightarrow\infty$, received some attention in recent literature (cf., e.g., \citet{Bai2010}, \citet{vert2010}, \citet{rao2012}, \citet{kim2014} and \cite{torg2014v2}). Within such a setting, \citet{Bai2010} studied a {\it single common change point} model and considered a classical least squares estimate, whereas \citet{vert2010,vert2011a} considered a {\it multiple} common change point model and adapted the {\it total variation denoising} approach to it. As pointed out in \cite{torg2014v2}, both methods may be seen as special cases of some general class of weighted CUSUM (cumulative sums) estimates. In the latter article, the consistency is studied for a whole class of such estimates which will also play a major role in the present work.

In this paper we turn to time series of spatial data ~$X_1,X_2,\ldots,X_d\in\mR^{m\times n}$, where the parameters ~$m,n$ ~are fixed and ~$d\rightarrow\infty$. (For our approach, the time parameter ~$d$ ~will turn out to correspond to the previously mentioned dimension parameter.) Our aim is to develop an algorithmic framework for the estimation of {\it change sets} ~$S$, where the means ~$E(X_k(i,j))$ ~for ~$(i,j)\in S$ ~differ from the means corresponding to  ~$(i,j)\not \in S$. The proposed algorithm is based on theoretical results in the aforementioned works of \citet{Bai2010},  \citet{vert2011a} and  \cite{torg2014v2}. The method is especially suitable for the estimation of multiple change sets, where their number does not necessarily has to be known in advance, and the approach may also be easily extended to other, more complex situations, e.g., with an irregular domain and irregular change sets. As a special case, the same principle may also be applied in a straightforward manner to the common multiple change point estimation within the panel data framework in \cite{torg2014v2} or \citet{vert2011a} in order to obtain consistent estimates for all changes when the number of panels tends to infinity.

For demonstration purposes, the suggested approaches are implemented with a graphical user interface as a Matlab application which can be obtained from the author or via \url{www.mi.uni-koeln.de/~ltorgovi}.

For some problems and approaches for change-set detection that are remotely related to our situation under consideration we refer the reader e.g. to \citet{Arnold2012}, \citet{Arnold2014} and the references therein (in particular to \citet{Spokoiny2000} and to the review article of \citet{Qui2007}).

\subsection*{Notation}
First, we need to introduce some notation in order to formulate our model. Consider a set ~$S\subset \mR^2$ ~of two-dimensional points, i.e. 
\[
	S=\big\{ (i,j) \;| \; i=i_1,\ldots, i_m\in \mN,\; j=j_1,\ldots, j_n\in \mN, \;m,n\in\mN \big\}.
\]
We call any point ~$(i,j\pm1), (i\pm1,j) \in S$ ~to be {\it adjacent } or a {\it neighbour } to ~$(i,j)\in S$. Correspondingly, two sets ~$S_1,S_2\subset S$ ~are called adjacent if at least two nodes ~$u\in S_1$, $v\in S_2$ ~exist that are adjacent to each other.  Furthermore, we will associate ~$S$ ~with an undirected graph such that each point ~$(i,j)\in S$ ~corresponds to a node and such that all adjacent nodes are connected by edges. The {\it boundary } of ~$S$ ~is a subset ~$B\subset S$ ~which contains only nodes ~$(i,j)\in S$ ~that have less than four distinct neighbours, i.e. nodes that are not {\it ~$4$-connected}. Correspondingly, an {\it interior } node  ~$(i,j)\in S$ ~has to have four neighbours within ~$S$. The set ~$S$ ~is called {\it connected } whenever the associated graph is connected, i.e., if there exists a path between any two nodes ~$n_1,n_2\in S$.

As usual, we define the distance ~$d(u,v)$ ~of two nodes ~$u, v \in S$ ~w.r.t. the set ~$S$ ~as the shortest path between them (within ~$S$). Accordingly, we define the distance of two sets ~$S_1, S_2\subset S$ ~w.r.t. the set ~$S$ ~as 
\[
	d(S_1,S_2)=\inf \{d(u,v)|\;u \in S_1, v\in S_2\},
\]
with ~$\inf\emptyset=\infty$ ~in which case the sets are obviously disjoint. The {\it Jaccard distance } between two sets ~$A,B\subset S$ ~is defined by
\begin{equation}\label{eq:jaccard}
	d_J(A,B) = \frac{|A\cup B |-|A\cap B|}{|A\cup B |}.
\end{equation}
This is a common measure of distance between two subsets, which will be used to quantify the precision of our estimates later on.
We are now in a position to formulate our actual model.

\subsection*{Statistical model}
We consider a spatio temporal {\it signal plus noise } model given by
\begin{equation}\label{eq:spatial_model}
		X_k(i,j) = m_k(i,j)  + \varepsilon_k(i,j)
\end{equation}
for ~$k=1,\ldots,d$ ~and ~$(i,j)\in D$, where ~$D$ ~is assumed to be a rectangular domain, i.e.
\[
	D=\{(i,j)|\; i=1,\ldots,m,\; j=1,\ldots,n\},
\]
with ~$m,n\geq 4$ ~being fixed integers. Here, the ~$m_k(i,j)$ ~are the deterministic signals and ~$\varepsilon_k(i,j)$ ~are the random variables representing the noise. One may interpret the sequence ~$\{X_k\}$ ~as a random field defined on the lattice ~$D$ ~and consider ~$k$ ~to be the time parameter.  Throughout, we assume the family of random variables
\[
	\{\varepsilon_k(i,j),\;i=1,\ldots,m,\;j=1,\ldots,n\}
\] to be i.i.d. for each ~$k$, identically distributed in ~$k$ with ~$0<E(\varepsilon_1(1,1)^2)=\sigma^2<\infty$ ~and centered. The data may be dependent in time, for which appropriate conditions will be imposed later on. Additionally, we have to assume uniformly bounded finite fourth moments.

For the general setting we may assume a partitioning of the domain ~$D$ ~as
\begin{equation}\label{eq:decomposition_domain}
	D = S_1 + \ldots + S_K,
\end{equation}
for some ~$K>1$, where each set ~$S_k$, $k=1,\ldots,K$ ~is non-empty and is assumed to be connected. Further, we assume piecewise constant means, i.e.
\begin{equation}\label{eq:mean_model}
	m_k(i,j) = \sum_{l=1}^K m_k(S_l)\one_{S_l}(i,j),
\end{equation}
with  ~$m_k(S_l)\in\mR$, such that for any ~$k$ ~it holds that ~$m_k(S_l)\neq m_k(S_u)$ ~for all adjacent sets ~$S_l,S_u$ ~with ~$u\neq l$. Our goal is the estimation of the partition ~$S_1,\ldots, S_K$ ~based on the noisy sequence ~$X_k$. (The Assumption \eqref{eq:mean_model} can be related e.g. to \citet[eq. (2)]{Spokoiny2000} but the statistical model there is non temporal.) As already mentioned, for ~$m=1$ ~the setting \eqref{eq:spatial_model} and \eqref{eq:mean_model} fits into the multiple change point scenario in panel data with fixed time parameter where the number of panels ~$d$ ~tends to infinity (cf., eg., \citet{vert2011a} and \citet{torg2014v2}).

The rectangular domain ~$D$ ~is chosen for simplicity of exposition only and as already mentioned all results discussed in this paper can be easily extended to more complex situations in a straightforward manner.  Recall that we consider asymptotics for ~$d\rightarrow\infty$.
\begin{definition} Assume the partitioning \eqref{eq:decomposition_domain}. We will call the sets ~$S_l$ ~for ~$l=1,\ldots,K$ ~to be {\it common change sets } if:
\begin{enumerate}
\item The sets ~$S_l$ ~are connected.
\item For all {\it total average changes}, defined as
 \begin{equation}\label{eq:delta_definition}
	\Delta^2_\infty(S_l,S_p) = \lim_{d\rightarrow \infty}\frac{\sum_{k=1}^d|\Delta_k|^2}{d},
\end{equation}
with ~$\Delta_k(S_l,S_p)=m_k(S_l)-m_k(S_p)$, it holds, for all adjacent sets ~$S_l,S_p$ ~with ~$l\neq p$, that 
 \begin{equation}\label{eq:delta_non_negative}
 0<\Delta^2_\infty(S_l,S_p)<\infty,
 \end{equation}
which quantifies the notion ``common'' in our setting.
\end{enumerate}
\end{definition}

Since the extension to multiple change sets is straightforward, we will (mostly) restrict ourselves in Sections \ref{sec:preliminaries}-\ref{sec:simulations} below to the single change set case, i.e.  to ~$K=2$, where ~$D = S + S^c$ ~and ~$S$, $S^c$ ~are both formally common change sets.  However, here it is more convenient to think that ~$S^c$ ~reflects the normal state region and ~$S$ ~is the only common change set differing from that normal state. We will use this terminology for brevity and simply write ~$\Delta^2_\infty=\Delta^2_\infty(S,S^c)$ ~in this situation.

\subsection*{Motivation}
The model \eqref{eq:spatial_model}-\eqref{eq:mean_model} with change sets ~$S_1,S_2,\ldots$ ~states a natural spatial extension of the setting considered e.g. by \cite{torg2014v2}.

One may think of digital imaging and assume a rectangular image sensor, i.e. an array of pixel sensors, corresponding to the domain ~$D$. Further, assume the image sensor to record a large test-sequence ~$X_1, \ldots, X_d$ ~of images that represent the light intensity (e.g. as monochrome grayscale images) and assume that the measurements, i.e. the images, are affected by some random noise.

Altogether, each image corresponds to an observation ~$X_k$ ~where ~$X_k(i,j)$ ~represents the measured intensity by the ~$(i,j)$-th pixel, where ~$m_k(i,j)$ ~is the true image intensity (cf., e.g., \citet{Qui2007}). Now, one may think of change sets ~$S_1,S_2\ldots$ ~to correspond e.g. to objects in the image that should be segmented or to a set of faulty pixels. 
The estimates discussed in this article can be used to estimate such sets based on a sufficiently long sequence of observations, i.e. when ~$d$ ~is large.

In contrast to more established settings and approaches (cf., e.g., \citet{Spokoiny2000} and \citet{Qui2007}) our aim here is to identify the partitioning based on a whole sequence of images. It is important that our model allows the means ~$m_k$ ~to change in each observation ~$k$ ~at any point ~$(i,j)$. For instance, we may think of changing lighting conditions while the ~$X_k$, $k=1,\ldots,d$ ~are recorded. (Otherwise, as shown in  Figure \ref{fig:averagingprobs}, assuming the means ~$m_k(i,j)$ ~to be constant across all ~$k$, one could e.g. simply rely on averages ~$\bar{X}_k(i,j)$ ~for each point ~$(i,j)$ ~to obtain a partitioning.) 
\\
\\
This article is organized as follows. We begin with preliminaries in Section \ref{sec:preliminaries}, where we briefly recall some theoretical results of \citet{vert2011a} and \cite{torg2014v2} on which our algorithms will be based. In Section \ref{sec:estimation_procedure} we describe the estimation algorithm together with the conditions that ensure consistency of the estimates. In Section \ref{sec:simulations} we show some simulation results to demonstrate the performance for finite ~$d$ ~and especially to show that even moderate ~$d$'s yield reasonable results.

\section{Preliminaries}\label{sec:preliminaries} 
Assume a ``single'' common change set scenario, i.e., observations ~$X_k$ ~in \eqref{eq:spatial_model} on a domain ~$D=S+S^c$ ~with common change sets ~$S$, $S^c$. Further, consider $(i,r)$-th horizontal {\it sub-slices }
\begin{equation}\label{eq:first_slice}
	\bY_j^{(i,r)}=[Y^{(i,r)}_{j,1},\ldots, Y^{(i,r)}_{j,d}]^T, \qquad j=1,\ldots,N,
\end{equation}
with ~$Y^{(i,r)}_{1,k},\ldots, Y^{(i,r)}_{N,k}$, obtained from these observations ~$X_k$ ~by setting
\[
	Y^{(i,r)}_{j,k} :=  X_k(i,r+j-1) 
\]
for each ~$k=1,\ldots,d$, ~$j=1,\ldots,N$ ~and some ~$r\in\{1,\ldots,n-N+1\}$ ~with ~$4\leq N\leq n$. Accordingly, we set ~$\eta^{(i,r)}_{j,k} :=  \varepsilon_k(i,r+j-1)$ ~for the corresponding innovations. The series ~$\bY_1,\ldots,\bY_N$ ~can be interpreted as panel data with ~$d$ ~panels and finite time horizon ~$N$. Notice, that the time parameter of the original spatial observations has now become a dimension parameter.

Assume that the particular ~$(i,r)$-th sub-slice ~$\{\bY^{(i,r)}_{j}\}_{j=1,\ldots,N}$ ~intersects a change set ~$S$ ~such that 
\begin{equation}\label{eq:condition_u}
(i,r+j-1) \in \begin{cases}S^c \;(S)& \text{for ~$j=1,\ldots,u(i,r)$,}\\
S \;\;(S^c)& \text{for ~$j=u(i,r)+1,\ldots,N$,}
\end{cases}
\end{equation}  
holds true for some ~$1\leq u(i,r)<N$~ and where  ~$r\in\{1,\ldots,n-N+1\}$. This situation is illustrated for the ~$(9,3)$-rd sub-slice of size ~$N=6$ ~and with ~$u(9,3)=2$ ~in Figure \ref{fig:schemealg} below. (We set formally ~$u(i,r)=\infty$ ~whenever \eqref{eq:condition_u} does not hold.)
\begin{figure}[H]%
\centering 
\includegraphics[width=0.75\textwidth]{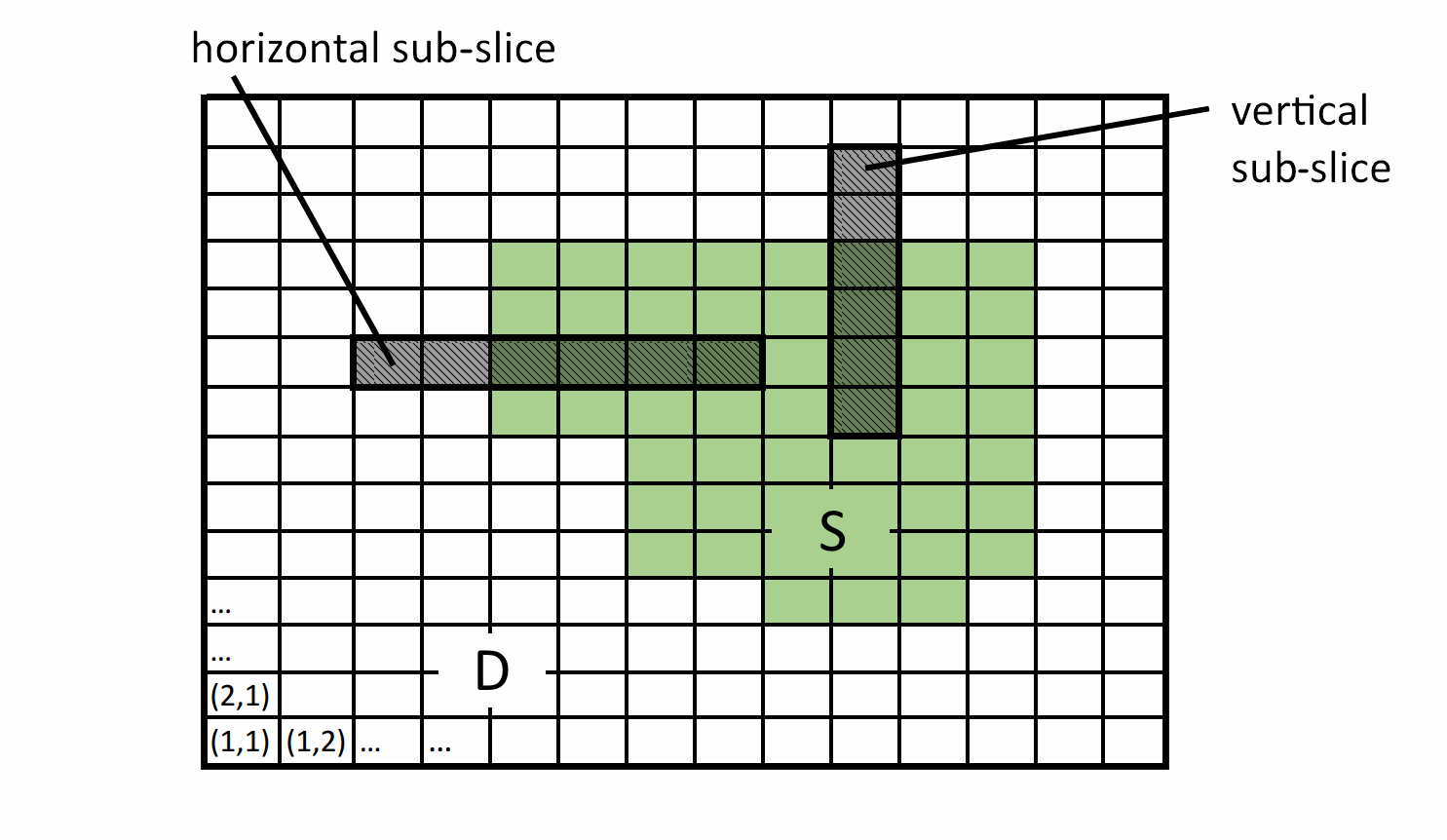}
\caption{The scheme represents the domain ~$D$ ~and sub-slices which intersect the change set ~$S$.}
\label{fig:schemealg}
\end{figure}
Altogether, we have a classical {\it change in the mean } scenario for any ~$\{\bY^{(i,r)}_{j}\}_{j=1,\ldots,N}$ ~where ~$u=u(i,r)$ ~corresponds to a single change point, i.e. \eqref{eq:first_slice}, fits into the framework considered in \cite{torg2014v2}. In order to estimate ~$u$ ~one may use a weighted CUSUM estimate 
\begin{equation}\label{eq:estimate}
	\hat{u}=\hat{u}(i,r)=\argmax_{p=1,\ldots,N-1} w(p,N)\sqrt{\sum_{k=1}^d\big|\sum_{j=1}^p(Y^{(i,r)}_{j,k}-\bar{Y}^{(i,r)}_{N,k})\big|^2}.
\end{equation}
Here, we restrict our considerations to a typical class of weighting functions, i.e.
\[
 w(p,N) = ((p/N)(1-p/N))^{-\gamma}
\]
parametrized by some ~$\gamma\in[0,1/2)$ ~which controls the sensitivity. The ~$\argmax$ ~in \eqref{eq:estimate} is defined, as usual, as the smallest index at which the maximum is attained and ~$\bar{Y}_{N,k}=\sum_{i=1}^N Y_{i,k}/N$.
\\
\\
To define classes of reasonable estimates for our original model \eqref{eq:spatial_model} we would like to make use of the sub-slices \eqref{eq:first_slice} and of the estimates \eqref{eq:estimate}, together with the corresponding theoretical results of \citet{vert2011a} and \cite{torg2014v2}. Therefore, we define the normalized {\it noise to change ratio } parameter ~$\rho$ ~w.r.t. the set ~$S$ ~and w.r.t. to the length of sub-slices ~$N$ ~as
\begin{equation}\label{eq:noisechangeratio}
	\rho(S)=\frac{1}{N}\frac{\sigma^2}{\Delta^2_\infty}.
\end{equation}
We also need the following assumption corresponding to \citet[Assumption (2.14)]{torg2014v2}:
\begin{equation}\label{eq:cross_var_spatial}
	 \frac{1}{d^2}\Var\left(\sum_{k=1}^d \Big(\sum_{j=1}^p (\eta_{j,k} - \bar{\eta}_{N,k})\Big)^2 \right)=o(1),
\end{equation}
as ~$d\rightarrow\infty$, for every ~$p=1,\ldots,N-1$. \eqref{eq:cross_var_spatial} is a weak dependence condition that e.g. clearly holds true if the ~$X_k$ ~are $m$-dependent (in particular independent) and identically distributed in ~$k$ (cf. \citet{torg2014v2}). Notice that we write ~$\eta_{j,k}$ ~instead of ~$\eta^{(i,r)}_{j,k}$ ~because \eqref{eq:cross_var_spatial} does not depend on the parameters ~$i$, $r$.
\\
\\
Our starting point is the fact that the above estimates \eqref{eq:estimate} consistently estimate ~$u$ ~under the assumptions on the model \eqref{eq:spatial_model} and under the Assumptions \eqref{eq:condition_u} and \eqref{eq:cross_var_spatial}. More precisely, we rely on the following ``switching''  behaviour of the estimates:
\vspace{10pt}
\begin{enumerate} 
\item If there is a single change point ~$u(i,r)$ ~w.r.t. the ~$(i,r)$-th sub-slice \eqref{eq:first_slice}, such that condition \eqref{eq:condition_u} is fulfilled, then ~$\hat{u}$ ~estimates the true change-point consistently for any ~$\gamma\in[0,1/2)$ ~if the noise to change ratio is below a positive threshold ~$R(\gamma,u,N)$. As ~$d\rightarrow\infty$, it holds that
\begin{equation}\label{eq:non-spurios}
	P(\hat{u}=u)\rightarrow 1
\end{equation}
for any ~$\gamma\in[0,1/2)$ ~given that ~$\rho<R(\gamma,u,N)$ (cf., eg., \citet[Theorem 2]{vert2011a} and \citet[Theorems 2.6 and 2.13]{torg2014v2}). The optimal threshold ~$R$ ~strongly depends on the parameter ~$\gamma$. The particular values for ~$R(0,u,N)$ may be obtained from \citet[Theorem 2]{vert2011a} in a closed form. Furthermore, it holds that ~$R(1/2,u,N) = \infty$ (cf., e.g., \citet[Theorem 3]{vert2011a} in the Gaussian i.i.d. case and take \citet[Theorem 2.6]{torg2014v2} into account regarding the nonparametric and dependent settings). We set
\[
	\cR(\gamma,N) :=\min_{u=1,\ldots,N-1} R(\gamma,u,N),
\]
which again does not depend on parameters ~$i$, $r$.
\item If there is no change point at all in the ~$(i,r)$-th sub-slice \eqref{eq:first_slice}, i.e. if it holds that ~$(i,r+j-1) \not \in S$ ~for ~$j=1,\ldots,N$ ~or that ~$(i,r+j-1) \in S$ ~for ~$j=1,\ldots,N$, then ~$\hat{u}$ ~estimates a spurious change. It holds that, as ~$d\rightarrow\infty$, 
\begin{equation}\label{eq:spurios}
	P\Big(\hat{u}=\lfloor N/2\rfloor \lor \hat{u}=\lceil N/2\rceil \Big)\rightarrow 1
\end{equation}
for any ~$\gamma\in[0,1/2)$ (cf. \citet[Remark 2.7]{torg2014v2}).
\end{enumerate}
\vspace{10pt}
We do not have closed form expressions for ~$R(\gamma)$ ~if ~$\gamma\in(0,1/2)$. However, from \citet[disp. (2.12)]{torg2014v2} it is clear that ~$R(\gamma)$ ~tends to infinity as ~$\gamma\uparrow1/2$ ~(cf. also further approximations to ~$R(\gamma)$ ~in \citet[Proposition 2.15]{torg2014v2}). 
\\
\\
The above switching behaviour in \eqref{eq:non-spurios} and in \eqref{eq:spurios} will provide consistent change set estimates ~$\hat{S}$ ~for ~$S$ ~in Section \ref{sec:estimation_procedure}.  Notice that the above switching property holds true for series \eqref{eq:first_slice} of any length ~$N\geq 4$, i.e. also for small single digit series of size ~$N\in\{4,\ldots,9\}$. In order to have a unique limit in \eqref{eq:spurios} we will consider only even ~$N$. Finally, we would like to mention that any other estimate ~$\hat{u}$ ~with an analogous switching behaviour might be used for scanning and aggregation in the next section as well.

\newpage
\section{Estimation procedure}\label{sec:estimation_procedure}
We stick to the single common change set scenario of the previous section. The key to the estimation of change sets in model \eqref{eq:spatial_model} will be a horizontal and/or a vertical overlapping scanning approach. The idea is to reduce the global problem of the change set estimation to many local single change point problems. This will allow us to lean on the results of \citet{vert2011a} and of \citet{torg2014v2} which were summarized in Section \ref{sec:preliminaries}.

We propose a four step procedure which is outlined in the following. Notice that each step 1-4 may be performed horizontally or vertically even though some steps are described for the horizontal approach only. Moreover, we explicitly allow to combine the horizontal with the vertical approach by proceeding consecutively. (The vertical approach proceeds in the very same manner with the obvious modifications. Clearly, the notation of the previous Section \ref{sec:preliminaries} has to be adapted as well which will also be indicated below.) 
\vspace{10pt}
\begin{enumerate}
	\item   {\bf Slicing:}
	\vspace{5pt}
	\begin{itemize}
	\item The time series ~$\{X_k\}$ ~is sliced into ~$m$ ~non-spatial ~$d$-dimensional time series ~$\{Y^{(i)}_{j,k},  k=1,\ldots,d\}_{j=1,\ldots,n}$ ~for ~$i=1,\ldots, m$ ~given by
	\begin{equation}\label{eq:global_slice}
		Y^{(i)}_{j,k} := X_k(i,j), \qquad j=1,\ldots,n, \quad k=1,\ldots, d.
	\end{equation}
	 We will denote ~$Y^{(i)}_{j,k}$  ~as a the ~$i$-th horizontal {\it slice } in the following. Similarly, we may define a ~$j$-th vertical slice by ~$\smash{\tilde{Y}^{(j)}_{i,k} := X_k(i,j)}$ ~for any ~$i=1,\ldots,m$, $k=1,\ldots, d$.
	  
	 \item  Now, we tacitly assume that ~$N$ ~is even and subdivide each ~$i$-th horizontal slice into ~$n-N$ ~overlapping sub-slices 
	 \begin{equation}\label{eq:subslice_def}
	\bY_j^{(i,r)}=[Y^{(i,r)}_{j,1},\ldots, Y^{(i,r)}_{j,d}]^T, \qquad j=1,\ldots,N
	\end{equation}
	 	 of size ~$N$, which are indicated by the parameter ~$r$, and where
	\begin{equation}\label{eq:subslice_def2}
		Y^{(i,r)}_{j,k}:= Y^{(i)}_{r+j-1,k},\qquad j=1,\ldots,N
	\end{equation}
	for ~$k=1,\ldots,d$, $r=1,\ldots,n-N+1$ ~and ~$i=1,\ldots, m$. Similarly, we may subdivide the ~$j$-th vertical slice into vertical sub-slices by
	\[
		\tilde{\bY}_i^{(j,r)}=[Y^{(j,r)}_{i,1},\ldots, Y^{(j,r)}_{i,d}]^T, \qquad i=1,\ldots,N, 
	\] 
	where ~$\smash{\tilde{Y}^{(j,r)}_{i,k}:=\tilde{Y}^{(j)}_{r+i-1,k}}$, $i=1,\ldots,N$, for ~$k=1,\ldots,d$, $r=1,\ldots,n-N+1$ ~and ~$j=1,\ldots, n$. Definition \eqref{eq:subslice_def} resembles \eqref{eq:first_slice}. However, in the 3rd step it will be convenient to think of all horizontal (vertical) sub-slices as parts of the same horizontal (vertical) slice, respectively.
	\end{itemize}
\end{enumerate}
\vspace{10pt}
\begin{enumerate}[resume]
	\item {\bf Scanning for critical points (Aggregation):} 
	\vspace{5pt}
	\begin{itemize}  
	\item Any sub-slice \eqref{eq:subslice_def}, or the vertical counterpart, is now treated as an individual time series to which we apply a single change-point estimate \eqref{eq:estimate} with any ~$\gamma\in[0,1/2)$ ~as described in Section \ref{sec:preliminaries}. 
	 (Also we tacitly assume the necessary modifications for vertical slices). Since we have ~$n-N$ ~sub-slices for each ~$i$, we aggregate ~$n-N$ ~estimated change point locations
	 \[
	 	\{\hat{u}(i,r)\in \mN, \quad r=1,\ldots,n-N+1\},
	 \]
	 again, for any ~$i$. These locations will be called {\it critical points } in our spatial context.
	\item The locations ~$\hat{u}(i,r)$ ~are integer-valued numbers since they are computed w.r.t. the ~$(i,r)$-th sub-slices. Hence, we need to map them back on our grid domain ~$D$ ~via
	\[
		\hat{U}(i,r) :=(i,\hat{u}(i,r) + r - 1) \in D
	\]
	for ~$r=1,\ldots,n-N+1$, $i=1,\ldots,m$. For theoretical reasons, we will restrict the admissible change sets ~$S$ ~by requiring ~$(i,j)\not\in S$ ~if ~$j>n-N+1$ ~or if ~$i>m-N+1$. Also, for technical reasons, we have to set ~$\hat{U}(i,r):=(i,0)$ ~for ~$r=n-N+2,\ldots,n$ ~and ~$i=1,\ldots,m$.
	\end{itemize}
	\vspace{10pt}
	\item {\bf Selection of relevant critical points:} \\\\
	In this step, our aim is to identify the boundary ~$B$ ~of the change set ~$S$. It will induce an estimate ~$\hat{S}$ ~in a straightforward manner. Observe that only those critical points ~$\hat{U}(i,r)$ ~which are adjacent to the change set ~$S$ ~(or lie in ~$B$) may help us to identify this boundary and therefore the set ~$S$. Asymptotically, i.e. as ~$d\rightarrow\infty$ ~and with probability tending to ~$1$, the points ~$v\in B$ ~will correspond to those ~$\hat{u}(i,r)$ ~and ~$\hat{U}(i,r)$ ~that are based on correct estimation \eqref{eq:non-spurios} and not on the spurious ones as in \eqref{eq:spurios}. Hence, we have to filter out the latter  by selecting a set ~$G$ ~of {\it relevant } critical points, based on ~$\hat{U}$, that is expected to be informative, based on suitable decision rules. 
\\
\\
We present the {\it overlapping }$(N,Q)$ rules, in form of a pseudocode. Let ~$H(i)$ ~denote the set of relevant points w.r.t. the ~$i$-th horizontal slice and recall that we assume ~$N\geq 4$ ~to be even.	The overlapping $(N,Q)$ rule is:
				\vspace{10pt} 
\begin{algorithmic} [1]
\STATE Choose some integer ~$1 \leq Q\leq N-2$
\FOR{$i=1$ ~to ~$m$} 
\STATE $H(i) \leftarrow \emptyset$
\FOR{$r=1$ ~to ~$n-N+1$} 
\IF{$\hat{U}(i,r)=\hat{U}(i,r+1)=\ldots = \hat{U}(i,r+Q)$} 
\STATE  $H(i) \leftarrow H(i)\cup \{\hat{U}(i,r)\}$
\ENDIF
\ENDFOR
\ENDFOR
\end{algorithmic}
\vspace{10pt}
The idea behind this algorithm is according to \eqref{eq:non-spurios} and \eqref{eq:spurios} that, assuming that the noise to change ratio lies below the threshold ~$\cR(\gamma,N)$, only the following cases may occur for the estimate ~$\hat{u}$ ~applied to \eqref{eq:subslice_def}:
	\vspace{10pt}
	\begin{enumerate}
		\item There is a single change at ~$1\leq u< N-1$. In that case we know that asymptotically, as ~$d\rightarrow\infty$, this point is estimated correctly as a critical point with probability tending to ~$1$. 
		\item There is more than one change point. This case will be excluded from our consideration (cf. conditions on the change sets \eqref{eq:restriction_ver} in Theorem \ref{eq:main_theorem} below).
		\item There is no change in this sub-slice. Hence, asymptotically as ~$d\rightarrow\infty$, we estimate ~$\hat{u}=\lfloor N/2\rfloor$ ~spuriously with probability tending to ~$1$.
	\end{enumerate}
	\vspace{10pt}

	For simplicity assume that ~$Q=1$ (the case ~$Q>1$ ~works in the same way). If some consecutive sub-slices, e.g. the ~$(i,r)$-th~ and ~$(i,r+1)$-th, intersect the change set region ~$S$, such that both have a single change point, i.e at ~$u(i,r)$ ~and at ~$u(i,r+1)$, then we are in case a) for both sub-slices and therefore ~$P(\hat{U}(i,r+1)=\hat{U}(i,r))\rightarrow 1$, as ~$d\rightarrow\infty$ which means that the condition of the 5th line, in the above algorithm, is fulfilled for ~$Q=1$. On the other hand, if there is no change in at least one of the two subslices, we have ~$P(\hat{U}(i,r+1)=\hat{U}(i,r))\rightarrow 0$, as ~$d\rightarrow\infty$, and the 5th line is always violated for any ~$Q$ . Hence, the sets ~$H(i)$, $V(j)$ ~will asymptotically contain only points that correspond to change-points in the sub-slices.
	 	
		\vspace{5pt}	
		The parameters ~$N$ ~and ~$Q$, allow us to control the sensitivity. In particular, a smaller ~$Q$ ~is less restrictive and therefore more sensitive, but less reliable.  The overlapping rule is sketched in Figures \ref{fig:change_profile} and \ref{fig:estimation_chunks} below. The illustration is based on a fragment of rectangular spatial observations of size ~$m=14$, $n=20$. Each field corresponds to a point ~$(i,j)$ ~in a straightforward manner. The sensitivity w.r.t. the parameters ~$(N,Q)$ ~and ~$\gamma$ ~is demonstrated in Figures \ref{fig:sensitivity1}-\ref{fig:sensitivity3}, below.
		
	\vspace{5pt}	 
	Subsequently, we write ~$V(j)$ ~for the set of relevant change points w.r.t. the ~$j$-th vertical slice. In case that we perform horizontal scanning only, we set formally ~$V(j)=\emptyset$ ~for ~$j=1,\ldots,n$ ~and analogously we set ~$H(i)=\emptyset$ ~for ~$i=1,\ldots,m$ ~if we would perform vertical scanning only.
	
	\vspace{5pt}
	The pooled set of all relevant critical points will be denoted by
	\[
		G = H(1)\cup \ldots \cup H(m) \cup V(1) \cup \ldots \cup V(n).
	\]
	\vspace{3pt}
	\item  {\bf Connecting relevant critical points:}  \\\\
	The set ~$G$ ~should asymptotically contain only nodes adjacent to the boundary ~$B$ ~of ~$S$. Hence, based on ~$G$, or on ~$H(i)$ ~and ~$V(j)$, we try to identify as many nodes of ~$S$ ~as possible.  Here, this is carried out for each slice separately. (We tacitly restrict the class of change-sets ~$S$ ~according to the Theorem \ref{eq:main_theorem}). Let ~$\hat{S}$ ~denote the estimate for ~$S$.
The horizontal procedure is:
\vspace{10pt}
\begin{algorithmic}[1] 
\STATE $\hat{S} \leftarrow \emptyset$
\FOR{$i=1$ to $m$} 
\IF{$H(i)=\{(i,x_1), (i,x_2), \ldots, (i,x_p)\}$, $x_1<x_2<\ldots <x_p$ ~with ~$p\geq 2$} 
\STATE $\hat{S}:=\hat{S}\cup \{(i,x_1+1),\ldots, (i,x_p)\}$
\ENDIF
\ENDFOR
\end{algorithmic}
\vspace{10pt}
and the analogous vertical procedure is:
\vspace{10pt}
\begin{algorithmic}[1]
\FOR{$j=1$ to $n$} 
\IF{$V(j)=\{(x_1,j), (x_2,j), \ldots, (x_p,j)\}$, $x_1<x_2<\ldots <x_p$ with $p\geq 2$} 
\STATE $\hat{S}:=\hat{S}\cup \{(x_1+1,j),\ldots, (x_p,j)\}$
\ENDIF
\ENDFOR
\end{algorithmic}
\vspace{10pt}
	\end{enumerate} 
	
\begin{figure}[H]%
\centering 
\vspace{-50pt}
\includegraphics[width=0.65\textwidth]{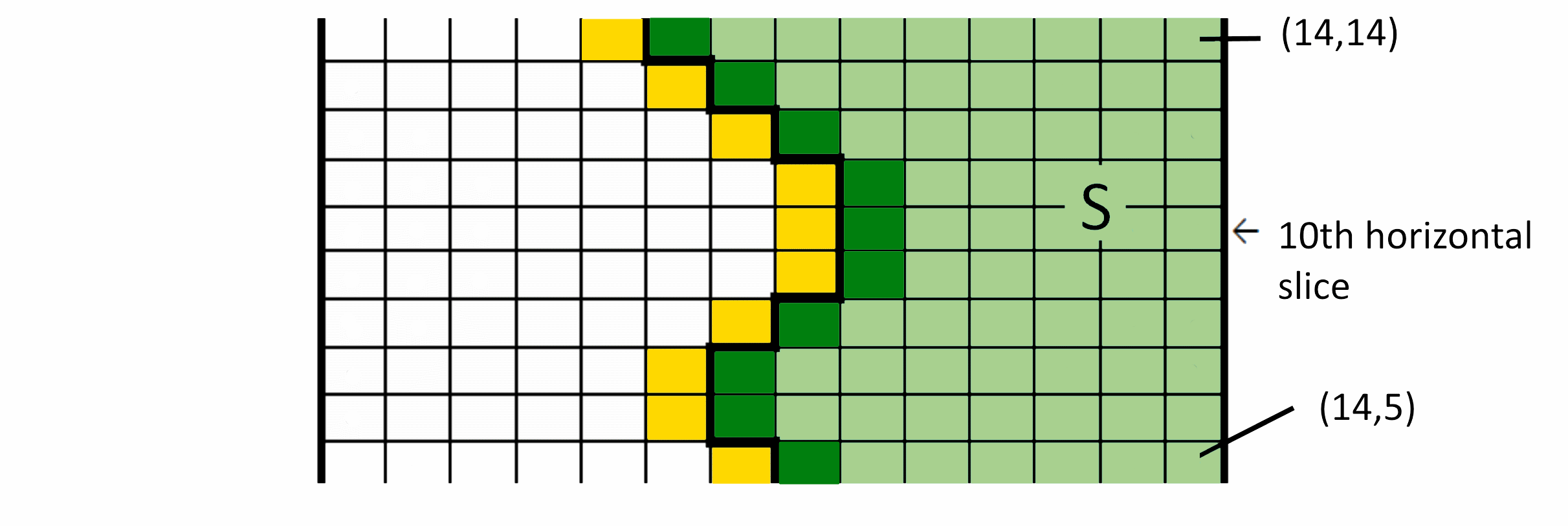}
\caption{The common change set ~$S$ ~is indicated by green color. The darker fields indicate the boundary of ~$S$. The yellow fields indicate relevant critical points that will be eventually selected, i.e. with probability tending to 1 as ~$d\rightarrow\infty$, applying the overlapping $(4,2)$ rule with any ~$\gamma\in[0,1/2)$.}
\label{fig:change_profile}
\end{figure}

\begin{figure}[H]%
\centering 
\subfloat[][scanning result for ~$N=6$ ~with any ~$\gamma<1/2$; step 2]{%
\includegraphics[width=0.35\textwidth]{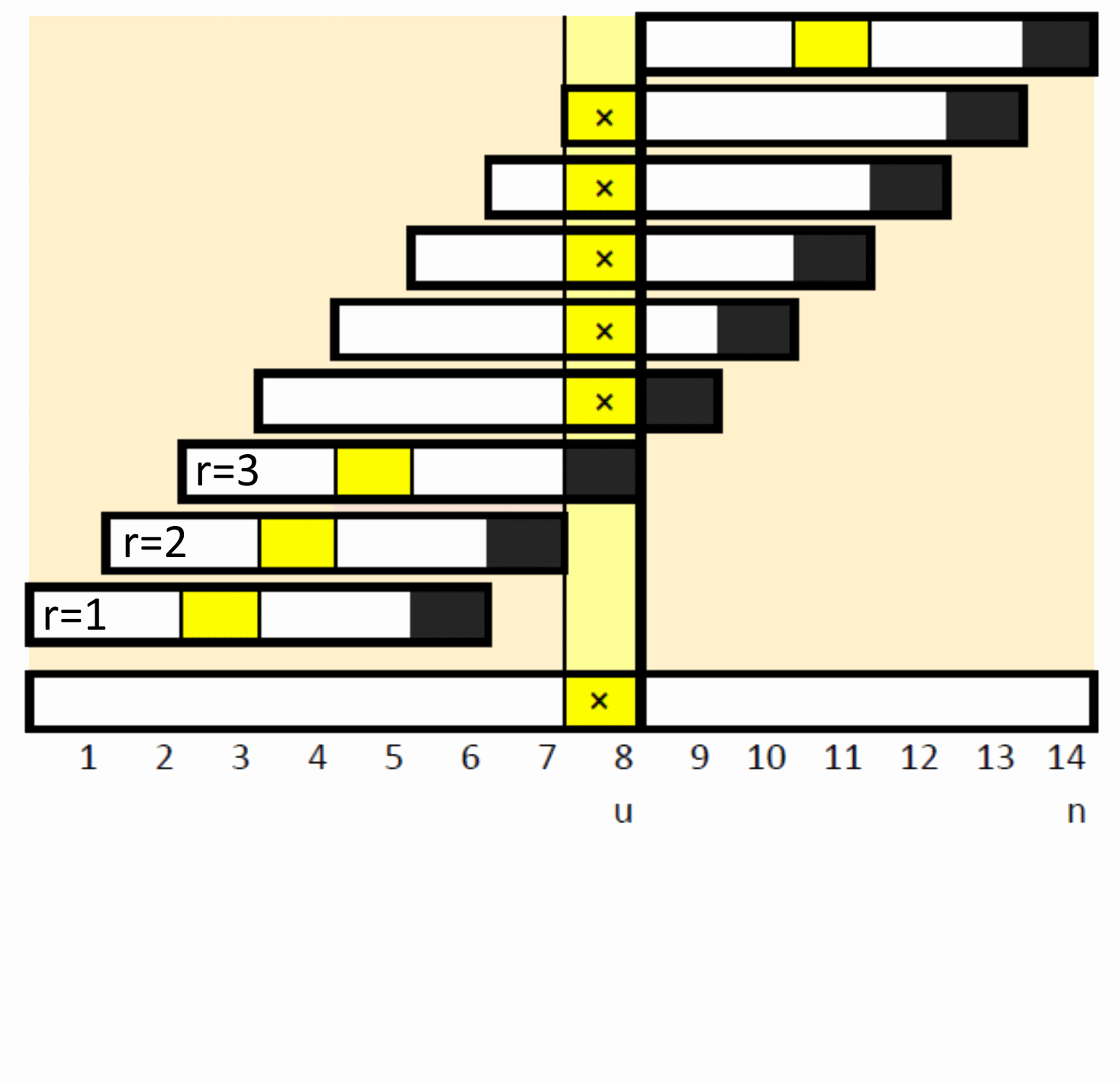}
}%
\subfloat[][overlapping (6,4) rule with any ~$\gamma<1/2$; step 3]{%
\includegraphics[width=0.35\textwidth]{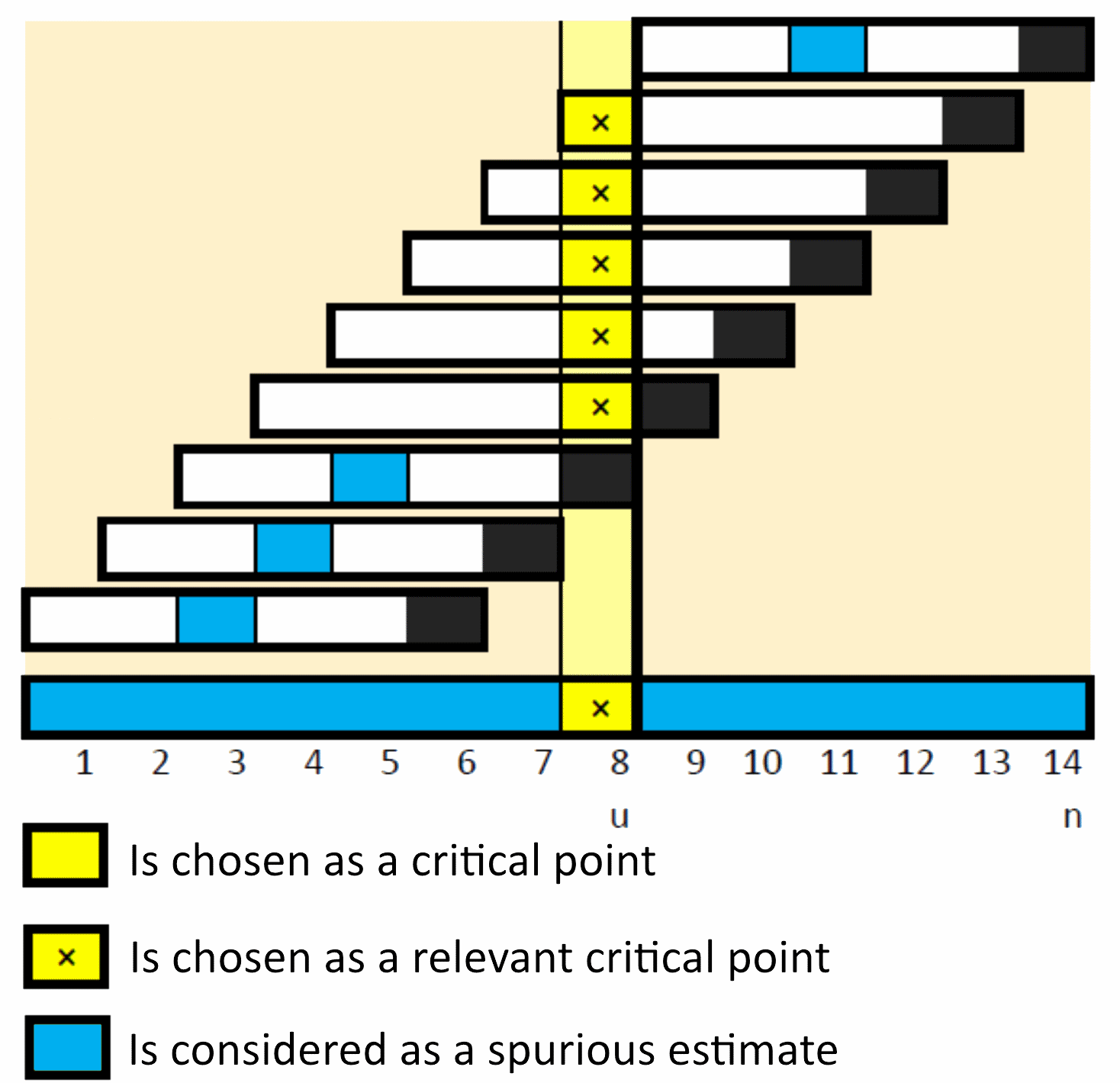}
}
\caption{The figures illustrate the algorithm for the overlapping rule. All figures show the ~$10$-th horizontal slice of Figure \ref{fig:change_profile} and the corresponding ~$n-N$ ~overlapping sub-slices. It is indicated which points are selected as relevant critical points asymptotically with probability tending to 1 as ~$d\rightarrow\infty$)}%
\label{fig:estimation_chunks}
\end{figure}

\begin{theorem}\label{eq:main_theorem}
Assume a rectangular domain ~$D$ ~with ~$\min\{m,n\}\geq \xi \geq 4$, a boundary ~$B$ ~and a common connected change set ~$S$ ~with ~$d(S,B)\geq \xi-1$ ~for some ~$\xi\in\mN$. Define vertical and horizontal intersections by 
\begin{align}\label{eq:restriction_ver}
\begin{split}
	H_i&:=S\cap \{(i,l)|\quad l=1,\ldots,n\},\\
	V_j&:=S\cap \{(l,j)|\quad l=1,\ldots,m\},
\end{split}
\end{align} 
for all  ~$j=1,\ldots,n$, $i=1,\ldots,m$, respectively. Furthermore, Assume that all ~$H_i$ ~and ~$V_j$ ~are either empty or connected sets. We have to state three different assumptions:
\begin{enumerate}
\item Assume that we use the horizontal approach and that for all  ~$i=1,\ldots,m$ ~it holds that ~$|H_i|\geq \xi$ ~if ~$H_i\neq\emptyset$.
\item Assume that we use the vertical approach and that for all  ~$j=1,\ldots,n$ ~it holds that ~$|V_j|\geq \xi$ ~if ~$V_j\neq\emptyset$.
\item Assume that we combine the horizontal together with the vertical approach and that for all  ~$i=1,\ldots,m$, $j=1,\ldots,n$ ~it holds that ~$\max\{|H_i|, |V_j|\}\geq \xi$ ~if ~$(i,j)\in H_i\cap V_j$.
\end{enumerate}
Assume that we use the overlapping (N,Q) rule, with ~$4\leq N\leq \xi$ ~and ~$1\leq Q\leq N-2$ ~where ~$N$ ~is even. Furthermore, let ~$\gamma\in[0,1/2)$ ~and the ratio ~$\rho$ ~be below  ~$\cR(\gamma,\xi)$. Under either of the above Assumptions 1-3, given that \eqref{eq:cross_var_spatial} holds true w.r.t. all sub-slices \eqref{eq:first_slice}, it holds that, as ~$d\rightarrow\infty$,
\begin{equation}\label{eq:consistent_set}
  P(\hat{S}=S)\rightarrow 1.
\end{equation}
\end{theorem}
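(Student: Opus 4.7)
The plan is to decompose $\{\hat{S}=S\}$ into a finite intersection of events on individual sub-slice estimates, apply the switching property of Section \ref{sec:preliminaries} to each, and then verify deterministically that on the resulting good event the overlapping $(N,Q)$ rule together with the connection step reconstructs $S$ exactly. For every sub-slice estimate (horizontal $\hat{u}(i,r)$ or its vertical analogue) let $A_{i,r}$ denote the event that this estimate takes the value predicted by that switching property: the unique in-slice change point $u(i,r)$ when \eqref{eq:condition_u} applies, and $N/2$ otherwise (unique since $N$ is even). The connectedness of $S$ together with the length hypothesis of Case 1, $|H_i|\geq\xi\geq N$ whenever $H_i\neq\emptyset$, guarantees that every horizontal sub-slice falls into exactly one of these two categories. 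The assumption $\rho<\cR(\gamma,\xi)$, combined with \eqref{eq:cross_var_spatial} and the moment bound from Section \ref{sec:introduction}, then lets me invoke \eqref{eq:non-spurios} and \eqref{eq:spurios} to obtain $P(A_{i,r})\to 1$ for each sub-slice. Since there are only $O(mn)$ sub-slices, a union bound yields $P(E_d)\to 1$ for $E_d:=\bigcap_{i,r}A_{i,r}$.

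Working deterministically on $E_d$, fix a row $i$ with $H_i=\{(i,a),\ldots,(i,b)\}$ non-empty, so $b-a+1\geq\xi\geq N$; the hypothesis $d(S,B)\geq\xi-1$ places both boundary columns $a-1$ and $b$ strictly inside the scanning range $\{1,\ldots,n-N+1\}$. The $N-1$ sub-slices with starting column $r\in\{a-N+1,\ldots,a-1\}$ straddle only the left boundary, each with in-slice change point $u(i,r)=a-r$, so their mapped critical points coincide at $\hat{U}(i,r)=(i,a-1)$; since $N-1\geq Q+1$, the overlapping $(N,Q)$ rule fires and adds $(i,a-1)$ to $H(i)$. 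Symmetrically, the right boundary contributes $(i,b)$. Sub-slices entirely inside $S$ or entirely inside $S^c$ yield $\hat{U}(i,r)=(i,N/2+r-1)$, which strictly increases in $r$, so the equality clause of the rule is violated and no spurious point enters $H(i)$. Hence $H(i)=\{(i,a-1),(i,b)\}$, and the connection step adds exactly $\{(i,a),\ldots,(i,b)\}=H_i$ to $\hat{S}$. Rows with $H_i=\emptyset$ contribute $H(i)=\emptyset$ by the same argument. Taking the union over $i$ gives $\hat{S}=S$, proving Case 1; Case 2 follows by transposing rows and columns throughout.

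The delicate point is Case 3, where rows with $|H_i|<\xi$ are permitted provided $|V_j|\geq\xi$ for every $(i,j)\in S$ in such a row. When $|H_i|<N$, horizontal sub-slices may exhibit two transitions (pattern $S^c, S, S^c$), a situation outside the scope of the switching property, and the symmetric issue can occur for short columns. The main technical burden is to show (i) that the overlapping rule cannot promote these anomalous estimates into spurious additions to $\hat{S}$, while (ii) for every $(i,j)\in S$ in a short row, the column through $j$ satisfies $|V_j|\geq\xi$, so the Case 2 analysis applies to it and captures $(i,j)$ via $V(j)$. Point (i) reduces to a case analysis in $k=|H_i|\in\{1,\ldots,N-1\}$ (and symmetrically for columns), verifying that any runs of equal $\hat{U}$ arising from two-transition sub-slices either fail the $Q+1$ consecutive-equal condition or yield connection-step contributions still lying in $H_i\subseteq S$. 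I expect this combinatorial case analysis to be the principal obstacle of the proof; once it is established, pooling the horizontal and vertical reconstructions yields $\hat{S}=S$ on $E_d$, and the conclusion $P(\hat{S}=S)\to 1$ follows from $P(E_d)\to 1$.
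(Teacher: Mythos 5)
Your overall strategy is the same as the paper's: reduce $\{\hat S=S\}$ to finitely many sub-slice events, invoke the switching behaviour \eqref{eq:non-spurios}/\eqref{eq:spurios} under $\rho<\cR$ together with \eqref{eq:cross_var_spatial}, pass to a good event by a finite union bound, and then check deterministically that the overlapping $(N,Q)$ rule plus the connection step reconstructs $S$. Your execution of Cases 1 and 2 is correct and in fact more explicit than the paper: the $N-1\geq Q+1$ windows straddling a run boundary all map to the same grid point $(i,a-1)$ resp. $(i,b)$ (admissible because $d(S,B)\geq\xi-1$ keeps these indices in the scanning range), the no-change windows produce strictly increasing spurious locations $(i,r+N/2-1)$ that can never yield $Q+1$ equal values, so $H(i)=\{(i,a-1),(i,b)\}$ and the connection step returns exactly $H_i$.

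The problem is Case 3, which you do not prove: you correctly observe that under Assumption 3 a row may meet $S$ in a run of length $<N$ (e.g.\ the rows near the tips of the diamond-shaped set of Section \ref{sec:simulations}), so horizontal sub-slices \eqref{eq:subslice_def} with \emph{two} change points occur, a situation not covered by \eqref{eq:non-spurios} or \eqref{eq:spurios}; one must then show both that any $Q+1$ consecutive equal values of $\hat U$ generated by such windows can only occur at points adjacent to that short run (so the connection step adds only points of $H_i\subset S$), and that every such point of $S$ is nevertheless recovered through its long column via the Case 2 analysis. You explicitly defer this ("the principal obstacle") rather than carry it out, so your argument does not establish \eqref{eq:consistent_set} for the combined approach. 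Note that the paper handles this branch quite differently, and very tersely: its proof simply asserts that the conditions on $H_i$ and $V_j$ in \eqref{eq:restriction_ver} ensure every sub-slice contains either a single change point or none, and then concludes from finiteness of the number of sub-slices and the cited consistency/spuriousness results; it performs none of the two-change case analysis you anticipate. Whatever one thinks of that assertion as applied to Assumption 3, your submission as written stops short of the theorem's third case, and this is a genuine gap; to close it you would need a limit statement for $\hat u$ on two-change sub-slices (argmax concentrating on the pair of true change locations) and the combinatorial check that this cannot trigger the rule at any point outside $\{(i,a-1),(i,b)\}$.
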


\begin{proof}
Conditions on ~$H_i$ ~and ~$V_j$ ~ensure that only two cases may occur. Either a sub-slice contains a single change point or does not contain a change point at all.
The number of sub-slices is fixed and finite. Hence, the overall consistency of ~$\hat{S}$ ~follows from the consistency of all estimates ~$\hat{u}(i,r)$ ~in case of a change and from the fact of spurious estimation when there is no change (cf. \citet[Theorems 2.6, 2.13 and Remark 2.17]{torg2014v2}).
\end{proof}

\section{Simulations}\label{sec:simulations} 
We start this section by illustrating the estimation procedure and the corresponding Theorem \ref{eq:main_theorem} of the previous Section \ref{sec:estimation_procedure}.

We begin with parameters that will be common in our simulations. For simplicity, we consider a domain ~$D$ ~with ~$m,n=100$ ~and assume the noise to be i.i.d. normally distributed with ~$\varepsilon_1(1,1)=N(0,\sigma^2)$. We consider a single common change set scenario (see Section \ref{sec:introduction} and \ref{sec:preliminaries}) and define test change sets ~$S$~ with radius ~$w$, centered at a point $v\in D$, by
\[
	S_{w,v}:=\{u\in D | \; \|u-v\|_p \leq w\}.
\]
Here, $\|u-v\|_p$~ is the usual ~$p$-Norm for vectors in ~$\mR^2$ ~and ~$p=\infty$ ~denotes the maximum norm.
We call such sets {\it rectangular-shaped } for ~$p=\infty$, {\it round-shaped } for ~$p=2$ ~and {\it diamond-shaped } for ~$p=1$. The reference mean level ~$m_k(S^c)$ ~is set to ~$m_k=k$ ~for ~$k=1,\ldots,d$. 
\vspace{5pt}
\begin{remark}
Recall, that ~$d_J$ ~denotes the Jaccard distance defined in \eqref{eq:jaccard}. Clearly, relation \eqref{eq:consistent_set} implies ~$P(d_J(\hat{S},S)=0)\rightarrow 1$ ~as ~$d\rightarrow\infty$ ~which in turn yields ~$E(d_J(\hat{S},S))\rightarrow 0$  ~as ~$d\rightarrow\infty$. The latter follows e.g. due to uniform integrability of ~$d_J(\hat{S},S)\in[0,1]$.  In our simulations we demonstrate the influence of various parameters on the expected Jaccard distance ~$Ed_J=E(d_J(\hat{S},S))$ ~which is approximated based on ~$100$ ~repetitions. 
\end{remark} 
\vspace{5pt}
Table \ref{table:overlapping} shows ~$Ed_J$ ~for the overlapping ~$(N,Q)$ ~rules w.r.t. different parameters ~$d$, $N$, $Q$  ~and ~$\gamma$. Generally, it is not clear which combination of sensitivity parameters ~$(N,Q)$  ~and ~$\gamma$ ~is preferable. Hence, our advise is to plot different estimates and to rely on visual inspection (cf. Figures \ref{fig:sensitivity1} - \ref{fig:sensitivity3} below). Nevertheless, we see two tendencies where either the expected distance ~$Ed_J$ ~improves for larger ~$d$, e.g. for ~$N=4$, $Q=1$ ~and ~$\gamma=0.3$, or worsens, e.g. for ~$N=4$, $Q=1$ ~and ~$\gamma=0$. In accordance with the theory, the former happens if the ratio ~$\rho$ ~is below the threshold ~$\cR$ ~and the latter when ~$\rho$ ~is above. Notice, that the precision does not monotonously increase in ~$\gamma$. 

The Figures \ref{fig:sensitivity1} - \ref{fig:sensitivity3} are based on a spatio-temporal sequence which is illustrated in Figure \ref{fig:averagingprobs}. Comparing the Figures \ref{fig:sensitivity1} and \ref{fig:sensitivity2} for ~$\gamma=1/4$ ~we see that a larger ~$d$ ~improves the estimation. Clearly, a smaller ~$Q$ ~yields more sensitive estimates but on the other hand larger parameters ~$Q$ ~may isolate the change sets better. Table \ref{table:overlapping} and Figures \ref{fig:sensitivity1}-\ref{fig:sensitivity3} show that the usage of the horizontal procedure together with the vertical procedure might be better or worse than the plain horizontal approach. However, for some change sets, e.g. diamond-shaped, it is necessary to use both directions in order to obtain a consistent estimate  (cf. Figure \ref{fig:round_shaped_vertical}).

\begin{table}[H]
\begin{center}
\small
\begin{tabular}{l|rrrrr} 
\multicolumn{1}{l}{}&\multicolumn{1}{c}{$\gamma=0$}&\multicolumn{1}{c}{$\gamma=0.1$}&\multicolumn{1}{c}{$\gamma=0.2$}&\multicolumn{1}{c}{$\gamma=0.3$}&\multicolumn{1}{c}{$\gamma=0.4$} \tabularnewline
\hline
\hline
{\bf overlapping (4,1)} & & & & &  \tabularnewline 
$d=100$&0.46 (0.51)&0.47 (0.54)&0.49 (0.54)&0.50 (0.55)&0.51 (0.55)\tabularnewline
$d=200$&0.68 (0.52)&0.44 (0.46)&0.45 (0.53)&0.49 (0.55)&0.51 (0.55)\tabularnewline
$d=300$&0.89 (0.79)&0.52 (0.38)&0.41 (0.49)&0.48 (0.54)&0.51 (0.55)\tabularnewline
$d=500$&0.97 (0.94)&0.72 (0.52)&0.30 (0.28)&0.43 (0.53)&0.50 (0.55)\tabularnewline
$d=1000$&0.99 (0.99)&0.82 (0.66)&0.24 (0.06)&0.23 (0.35)&0.49 (0.54)\tabularnewline
\hline
\hline
{\bf overlapping (4,2)} & & & & & \tabularnewline 
$d=100$&0.92 (0.85)&0.74 (0.59)&0.53 (0.47)&0.44 (0.50)&0.45 (0.53)\tabularnewline
$d=200$&0.99 (0.98)&0.92 (0.85)&0.66 (0.48)&0.42 (0.43)&0.42 (0.52)\tabularnewline
$d=300$&1.00 (0.99)&0.95 (0.91)&0.73 (0.54)&0.38 (0.33)&0.41 (0.50)\tabularnewline
$d=500$&1.00 (1.00)&0.97 (0.94)&0.75 (0.56)&0.28 (0.16)&0.36 (0.47)\tabularnewline
$d=1000$&1.00 (1.00)&0.98 (0.97)&0.67 (0.45)&0.12 (0.02)&0.23 (0.35)\tabularnewline
\hline
\hline
{\bf overlapping (6,2)} & & & & & \tabularnewline 
$d=100$&0.42 (0.48)&0.42 (0.51)&0.43 (0.52)&0.45 (0.53)&0.46 (0.54)\tabularnewline
$d=200$&0.33 (0.36)&0.36 (0.45)&0.40 (0.50)&0.43 (0.53)&0.46 (0.54)\tabularnewline
$d=300$&0.24 (0.20)&0.27 (0.36)&0.36 (0.48)&0.41 (0.52)&0.45 (0.54)\tabularnewline
$d=500$&0.10 (0.04)&0.11 (0.16)&0.26 (0.38)&0.38 (0.50)&0.44 (0.53)\tabularnewline
$d=1000$&0.01 (0.00)&0.01 (0.01)&0.07 (0.12)&0.28 (0.41)&0.42 (0.52)\tabularnewline
\hline
\hline
{\bf overlapping (6,4)} & & & & & \tabularnewline 
$d=100$&1.00 (1.00)&1.00 (1.00)&1.00 (1.00)&0.95 (0.89)&0.65 (0.46)\tabularnewline
$d=200$&1.00 (1.00)&1.00 (1.00)&1.00 (1.00)&0.95 (0.90)&0.53 (0.30)\tabularnewline
$d=300$&1.00 (1.00)&1.00 (1.00)&1.00 (1.00)&0.94 (0.89)&0.40 (0.18)\tabularnewline
$d=500$&1.00 (1.00)&1.00 (1.00)&1.00 (1.00)&0.93 (0.86)&0.23 (0.06)\tabularnewline
$d=1000$&1.00 (1.00)&1.00 (1.00)&1.00 (1.00)&0.90 (0.82)&0.05 (0.00)\tabularnewline 
\end{tabular}\end{center}
\caption{The expected Jaccard distance ~$Ed_J$ ~for the overlapping algorithm for the horizontal (horizontal + vertical) approach. For simplicity we set ~$m_k(S)=k+(-1)^k$, $k=1,\ldots,d$, i.e. $m_k(S)=m_k(S^c)+(-1)^k$, and ~$\sigma^2=2$. The change set ~$S_{w,v}$ ~is rectangular-shaped centered at ~$v=(50,50)$ ~with ~$w=100/3$}\label{table:overlapping}
\end{table}

\begin{figure}[H]%
\centering 
\vspace{-20pt}
\subfloat[][horizontal approach]{%
\includegraphics[width=0.3\textwidth]{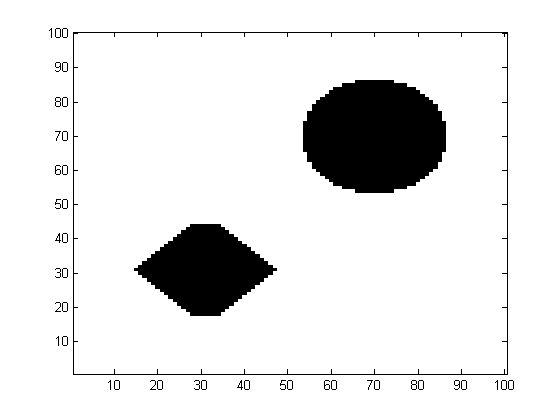}
}%
\subfloat[][horizontal+vertical]{%
\includegraphics[width=0.3\textwidth]{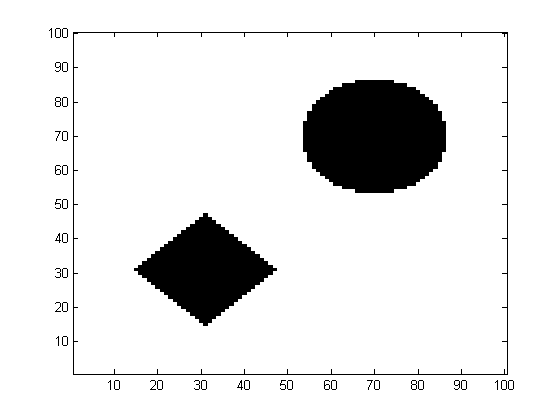}
}%
\caption{The figure shows the overlapping $(16,8)$ ~estimate with ~$\gamma=0$ ~for multiple diamond- and round-shaped change sets ~$D=S_D+S_R+S^c$ ~with ~$w=100/6$. The horizontal procedure is consistent for the round-shaped set but only the horizontal + vertical approach yields a consistent estimate for the diamond-shaped change set. It holds ~$m_k(S_D)=m_k(S^c)+(-1)^k$, $m_k(S_R)=m_k(S^c)$, $k=1,\ldots,d$ ~and ~$d=1000$ ~with ~$\sigma^2=1$}%
\label{fig:round_shaped_vertical}%
\end{figure}

\begin{figure}[H]%
\captionsetup[subfigure]{labelformat=empty, textfont=footnotesize}
\centering 
\subfloat[][]{\includegraphics[width=0.3\textwidth]{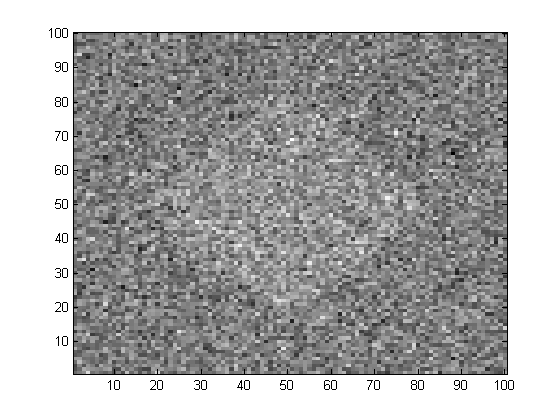}}
\subfloat[][]{\includegraphics[width=0.3\textwidth]{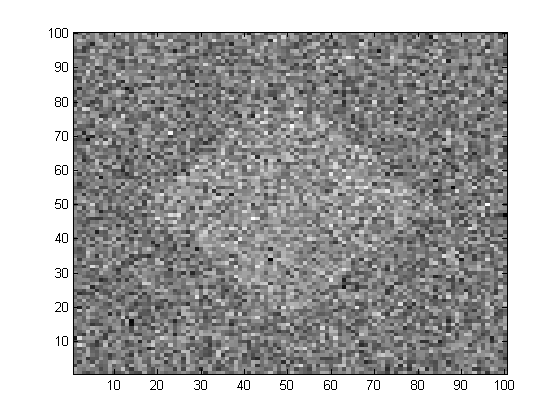}}
\subfloat[][]{\includegraphics[width=0.3\textwidth]{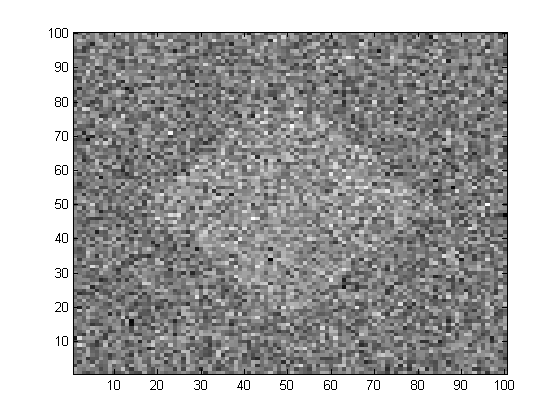}}
\\
\subfloat[][]{\includegraphics[width=0.3\textwidth]{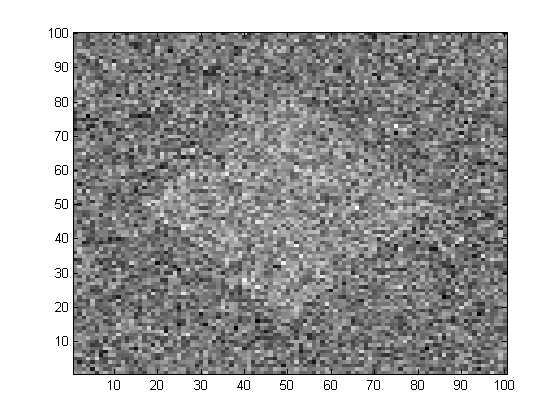}}
\subfloat[][]{\includegraphics[width=0.3\textwidth]{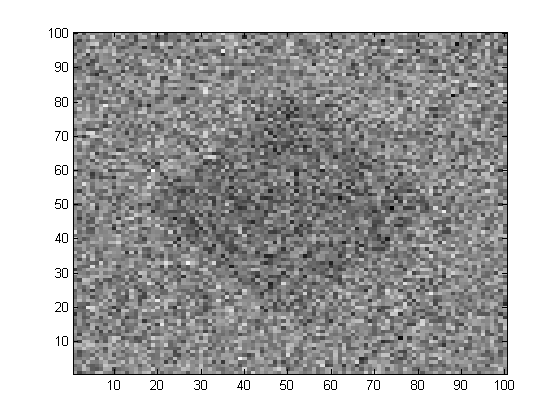}}
\subfloat[][]{\includegraphics[width=0.3\textwidth]{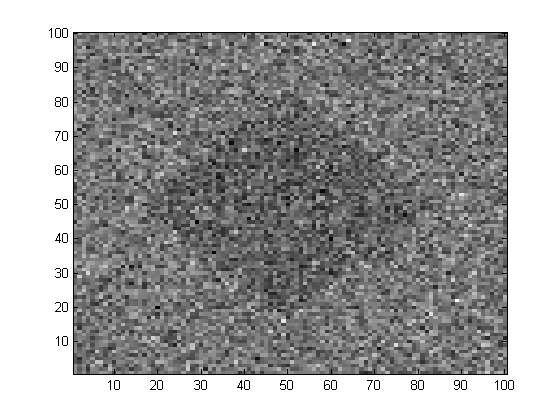}}
\\
\subfloat[][a)]{\includegraphics[width=0.3\textwidth]{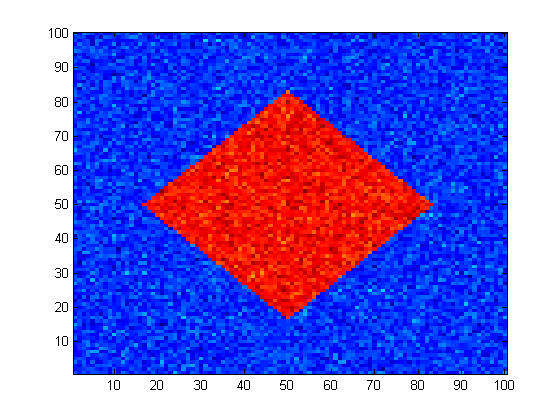}}
\subfloat[][b)]{\includegraphics[width=0.3\textwidth]{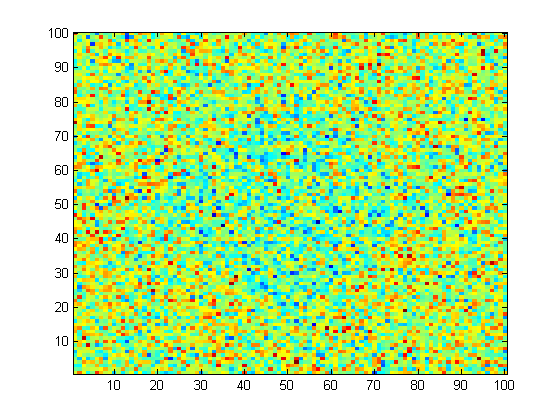}}
\subfloat[][c)]{\includegraphics[width=0.3\textwidth]{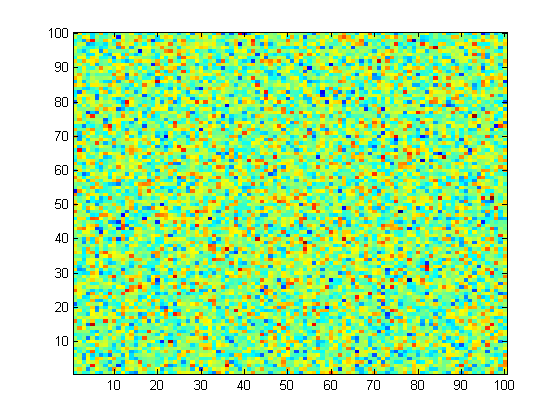}}
\caption{The first two rows show the observations ~$X_1,\ldots,X_6$ ~with ~$\sigma^2=2$ ~for ~$m_k(S)=k+(-1)^k$ ~and ~$m_k(S^c)=k$. The third row shows the averages ~$\bar{X}_{500}$. In a) for the simple case of ~$m_k(S)=0$ ~and ~$m_k(S^c)=1$. In b) for the case of ~$m_k(S)=0$ ~and ~$m_k(S^c)=(-1)^k$ and finally in c) for ~$m_k(S)=k$ ~and ~$m_k(S^c)=k+(-1)^k$. Notice that by averaging we loose (or at least do not gain) information in the last two settings}%
\label{fig:averagingprobs}
\end{figure}

\newpage
\begin{figure}[H]%
\captionsetup[subfigure]{labelformat=empty, textfont=footnotesize} 
\centering  
\vspace{-80pt}
\subfloat[][h, $(4,1)$,\\$\gamma=0$, rel.]{%
\includegraphics[width=0.25\textwidth]{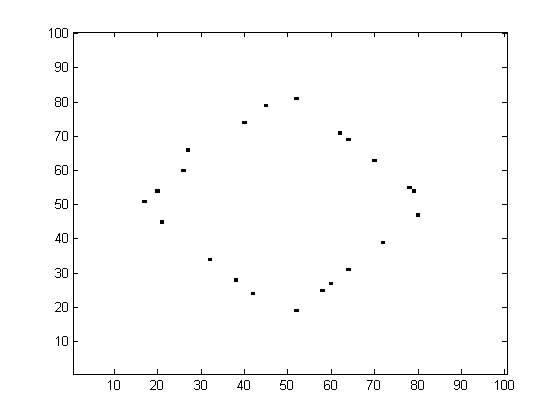}
}%
\subfloat[][h, $(4,2)$,\\$\gamma=0$, rel.]{%
\includegraphics[width=0.25\textwidth]{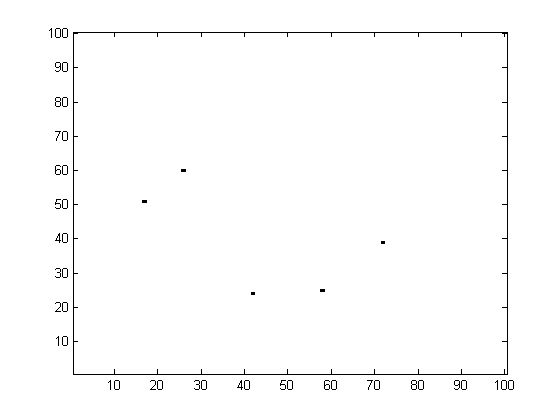}
}%
\subfloat[][h, $(4,1)$,\\$\gamma=0$, est.]{%
\includegraphics[width=0.25\textwidth]{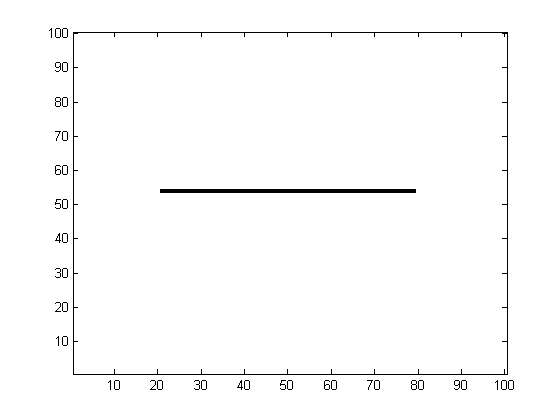}
}%
\subfloat[][h, $(4,2)$,\\$\gamma=0$, est.]{%
\includegraphics[width=0.25\textwidth]{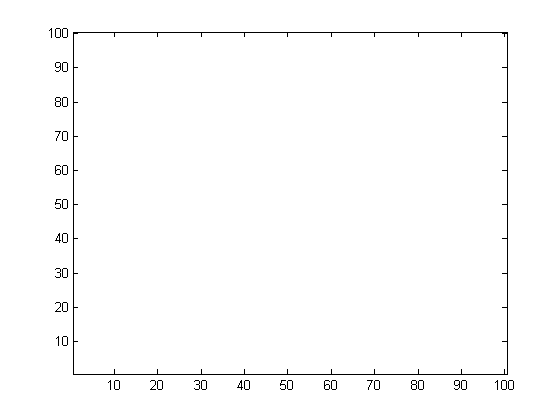}
}%
\\\vspace{-10pt}
\subfloat[][h, $(4,1)$,\\$\gamma=0.25$, rel.]{%
\includegraphics[width=0.25\textwidth]{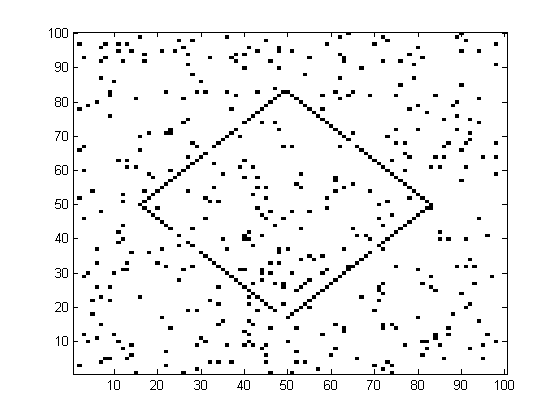}
}%
\subfloat[][h, $(4,2)$,\\$\gamma=0.25$, rel.]{%
\includegraphics[width=0.25\textwidth]{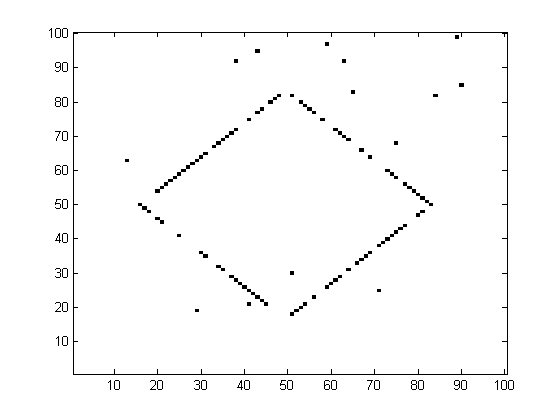}
}%
\subfloat[][h, $(4,1)$,\\$\gamma=0.25$, est.]{%
\includegraphics[width=0.25\textwidth]{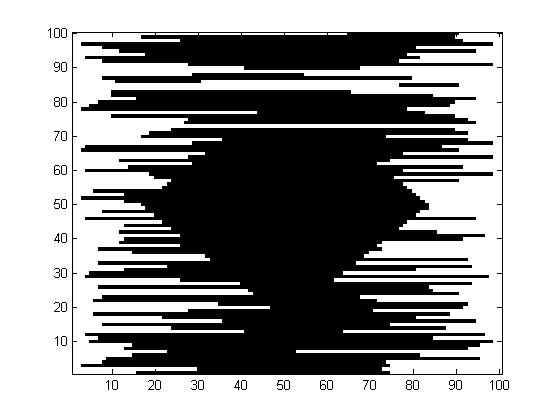}
}%
\subfloat[][h, $(4,2)$,\\$\gamma=0.25$, est.]{%
\includegraphics[width=0.25\textwidth]{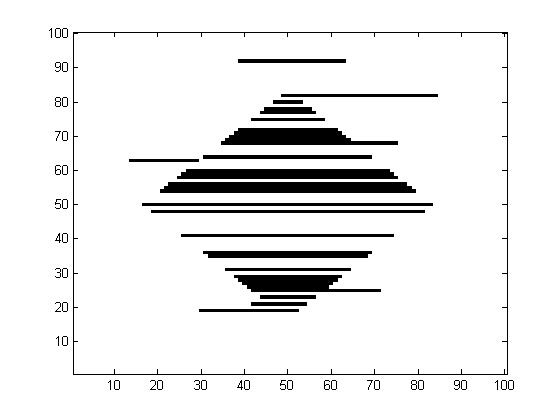}
}%
\\\vspace{-10pt}
\subfloat[][h, $(4,1)$,\\$\gamma=0.49$, rel.]{%
\includegraphics[width=0.25\textwidth]{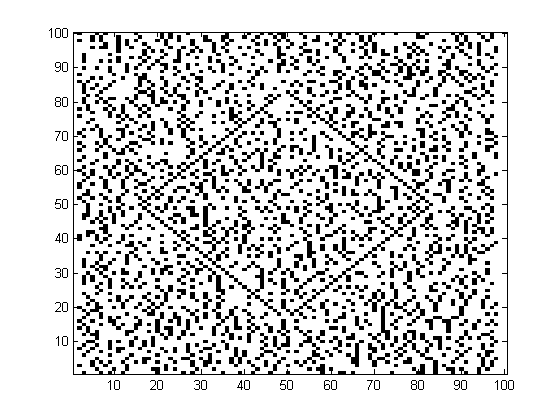}
}%
\subfloat[][h, $(4,2)$,\\$\gamma=0.49$, rel.]{%
\includegraphics[width=0.25\textwidth]{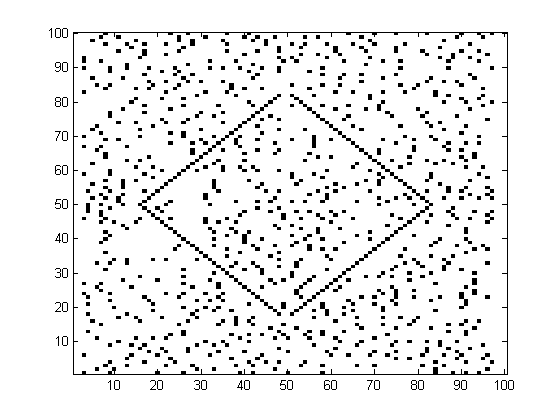}
}%
\subfloat[][h, $(4,1)$,\\$\gamma=0.49$, est.]{%
\includegraphics[width=0.25\textwidth]{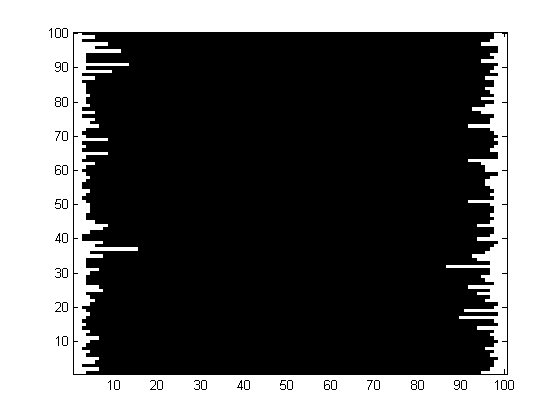}
}%
\subfloat[][h, $(4,2)$,\\$\gamma=0.49$, est.]{%
\includegraphics[width=0.25\textwidth]{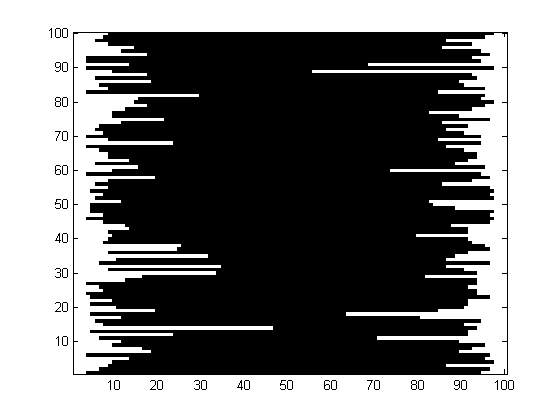}
}%
\\\vspace{10pt}
\subfloat[][h+v, $(4,1)$,\\$\gamma=0$, rel.]{%
\includegraphics[width=0.25\textwidth]{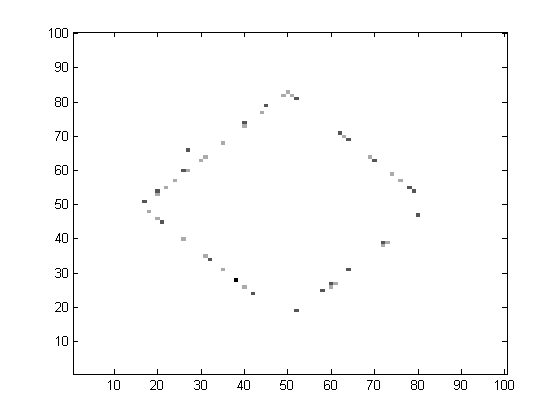}
}%
\subfloat[][h+v, $(4,2)$,\\$\gamma=0$, rel.]{%
\includegraphics[width=0.25\textwidth]{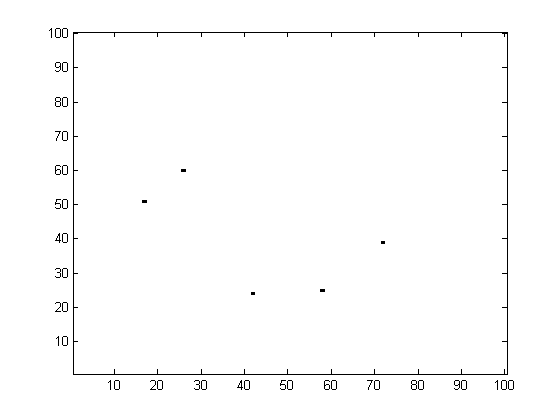}
}%
\subfloat[][h+v, $(4,1)$,\\$\gamma=0$, est.]{%
\includegraphics[width=0.25\textwidth]{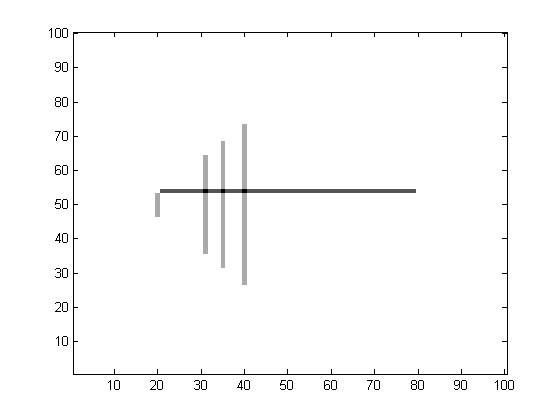}
}%
\subfloat[][h+v, $(4,2)$,\\$\gamma=0$, est.]{%
\includegraphics[width=0.25\textwidth]{blank.png}
}%
\\
\vspace{-10pt}
\subfloat[][h+v, $(4,1)$,\\$\gamma=0.25$, rel.]{%
\includegraphics[width=0.25\textwidth]{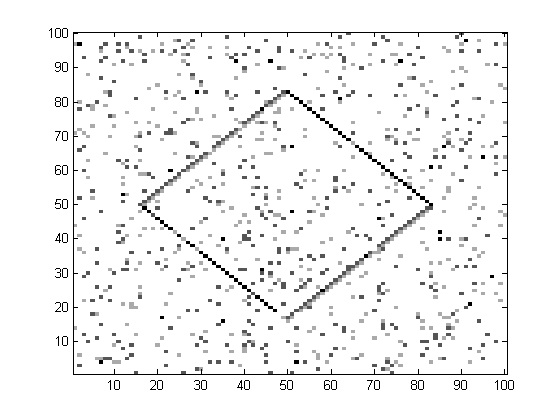}
}%
\subfloat[][h+v, $(4,2)$,\\$\gamma=0$, rel.]{%
\includegraphics[width=0.25\textwidth]{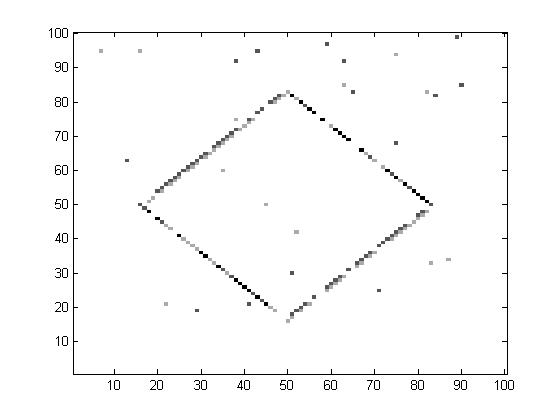}
}%
\subfloat[][h+v, $(4,1)$,\\$\gamma=0.25$, est.]{%
\includegraphics[width=0.25\textwidth]{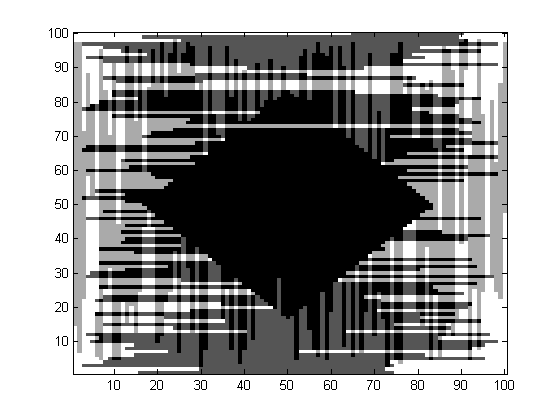}
}%
\subfloat[][h+v, $(4,2)$,\\$\gamma=0.25$, est.]{%
\includegraphics[width=0.25\textwidth]{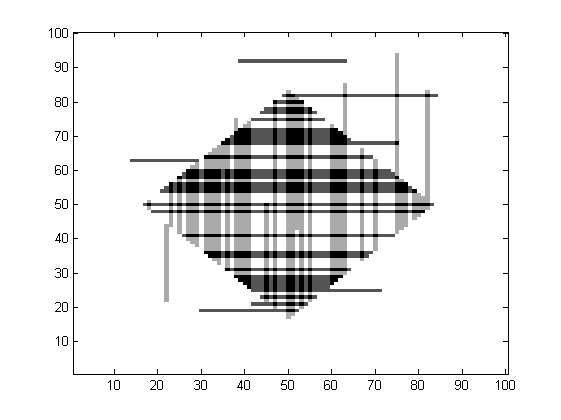}
}%
\\
\vspace{-10pt}
\subfloat[][h+v, $(4,1)$,\\$\gamma=0.49$, rel.]{%
\includegraphics[width=0.25\textwidth]{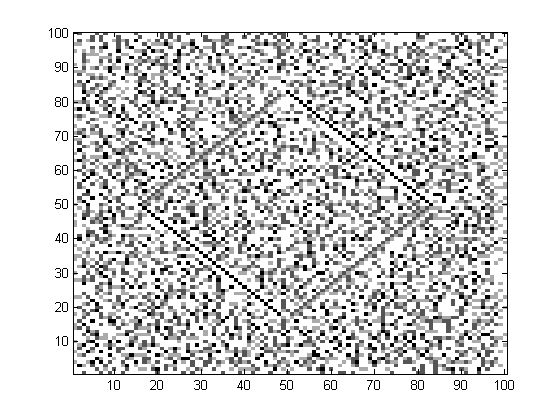}
}%
\subfloat[][h+v, $(4,2)$,\\$\gamma=0.49$, rel.]{%
\includegraphics[width=0.25\textwidth]{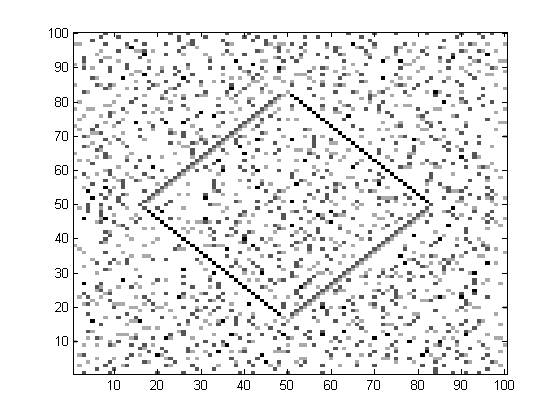}
}%
\subfloat[][h+v, $(4,1)$,\\$\gamma=0.49$, est.]{%
\includegraphics[width=0.25\textwidth]{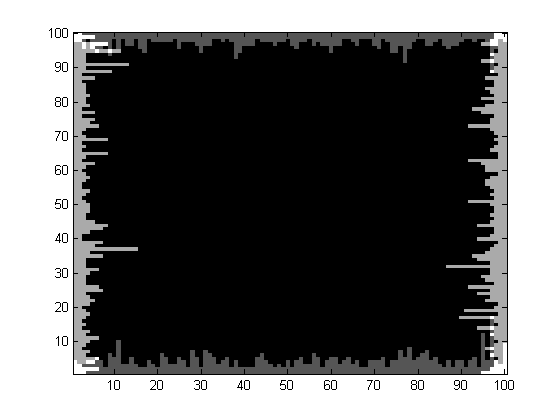}
}%
\subfloat[][h+v, $(4,2)$,\\$\gamma=0.49$, est.]{%
\includegraphics[width=0.25\textwidth]{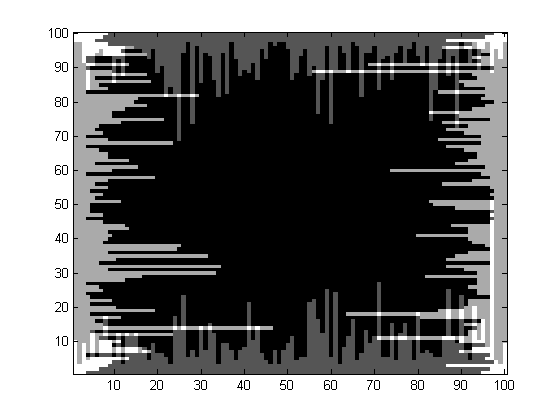}
}%
\caption{The figures show the relevant critical points (rel.) and the corresponding estimates (est.) selected by the horizontal (h) or the horizontal+vertical (h+v) approach using the overlapping ~$(N,Q)$ ~rule and based on a diamond-shaped change set. It is ~$m_k(S)=k+(-1)^k$, $k=1,\ldots,d$ ~and ~$d=500$ ~with ~$\sigma^2=2$}%
\label{fig:sensitivity1}
\end{figure}

\newpage
\begin{figure}[H]%
\captionsetup[subfigure]{labelformat=empty}
\centering  
\vspace{-80pt}
\subfloat[][h, $(4,1)$,\\$\gamma=0$, rel.]{%
\includegraphics[width=0.25\textwidth]{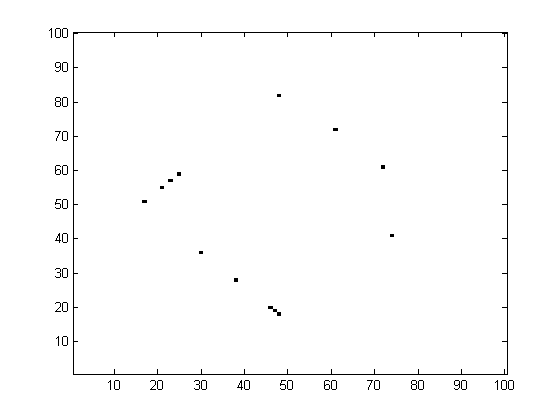}
}%
\subfloat[][h, $(4,2)$,\\$\gamma=0$, rel.]{%
\includegraphics[width=0.25\textwidth]{blank.png}
}%
\subfloat[][h, $(4,1)$,\\$\gamma=0$, est.]{%
\includegraphics[width=0.25\textwidth]{blank.png}
}%
\subfloat[][h, $(4,2)$,\\$\gamma=0$, est.]{%
\includegraphics[width=0.25\textwidth]{blank.png}
}%
\\\vspace{-10pt}
\subfloat[][h, $(4,1)$,\\$\gamma=0.25$, rel.]{%
\includegraphics[width=0.25\textwidth]{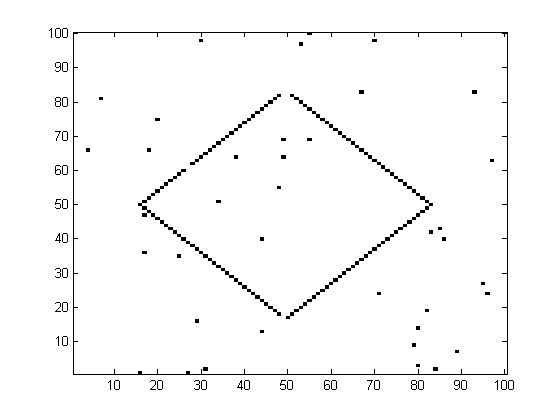}
}%
\subfloat[][h, $(4,2)$,\\$\gamma=0.25$, rel.]{%
\includegraphics[width=0.25\textwidth]{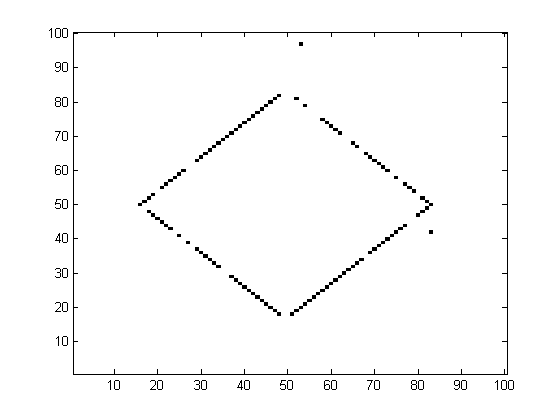}
}%
\subfloat[][h, $(4,1)$,\\$\gamma=0.25$, est.]{%
\includegraphics[width=0.25\textwidth]{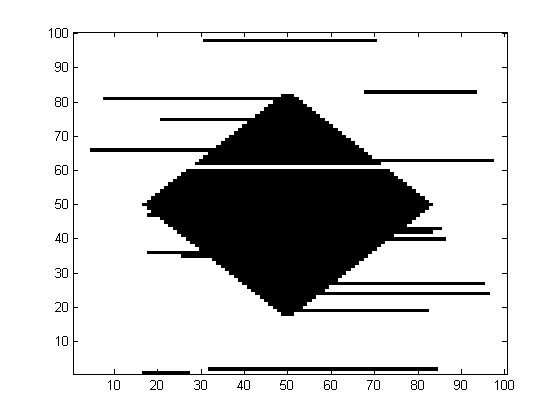}
}%
\subfloat[][h, $(4,2)$,\\$\gamma=0.25$, est.]{%
\includegraphics[width=0.25\textwidth]{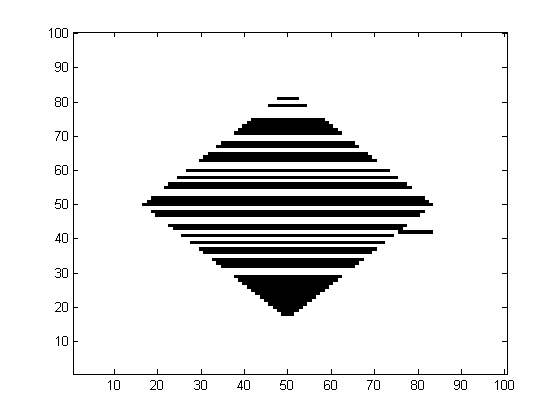}
}%
\\\vspace{-10pt}
\subfloat[][h, $(4,1)$,\\$\gamma=0.49$, rel.]{%
\includegraphics[width=0.25\textwidth]{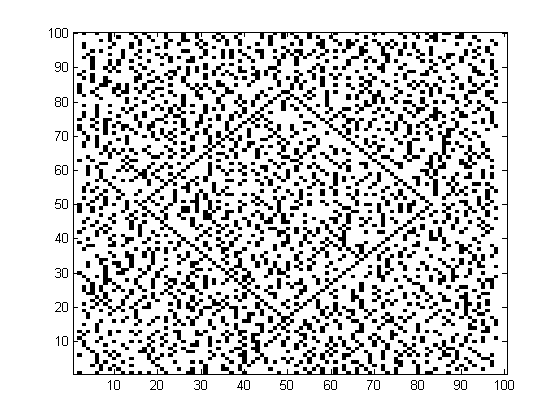}
}%
\subfloat[][h, $(4,2)$,\\$\gamma=0.49$, rel.]{%
\includegraphics[width=0.25\textwidth]{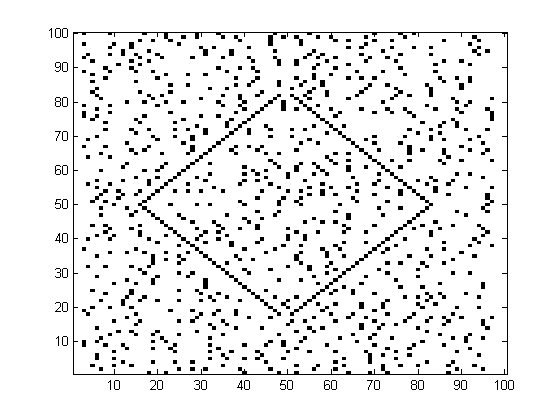}
}%
\subfloat[][h, $(4,1)$,\\$\gamma=0.49$, est.]{%
\includegraphics[width=0.25\textwidth]{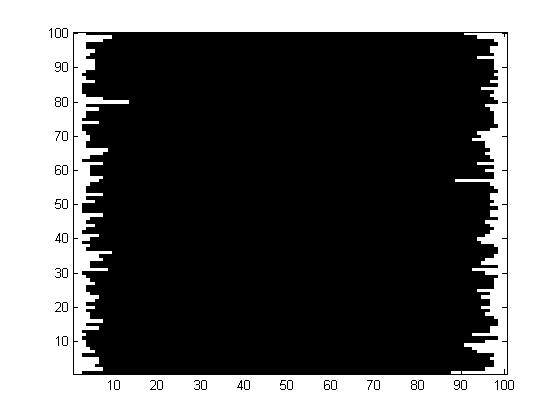}
}%
\subfloat[][h, $(4,2)$,\\$\gamma=0.49$, est.]{%
\includegraphics[width=0.25\textwidth]{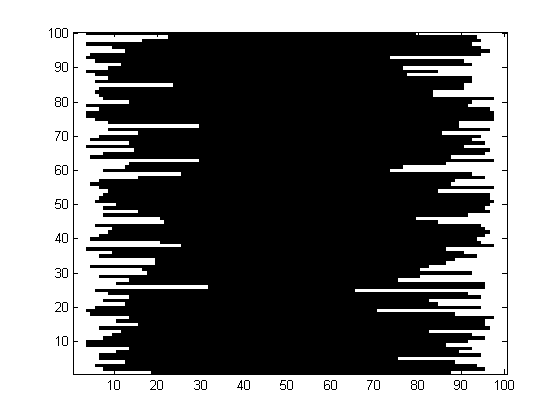}
}%
\\\vspace{10pt}
\subfloat[][h+v, $(4,1)$,\\$\gamma=0$, rel.]{%
\includegraphics[width=0.25\textwidth]{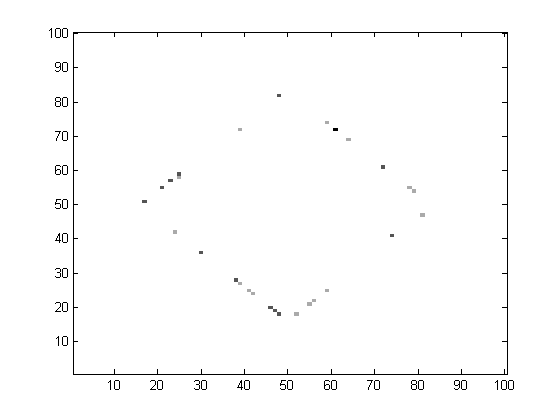}
}%
\subfloat[][h+v, $(4,2)$,\\$\gamma=0$, rel.]{%
\includegraphics[width=0.25\textwidth]{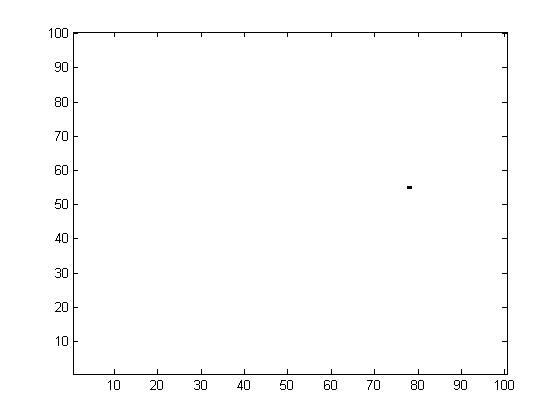}
}%
\subfloat[][h+v, $(4,1)$,\\$\gamma=0$, est.]{%
\includegraphics[width=0.25\textwidth]{blank.png}
}%
\subfloat[][h+v, $(4,2)$,\\$\gamma=0$, est.]{%
\includegraphics[width=0.25\textwidth]{blank.png}
}%
\\\vspace{-10pt}
\subfloat[][h+v, $(4,1)$,\\$\gamma=0.25$, rel.]{%
\includegraphics[width=0.25\textwidth]{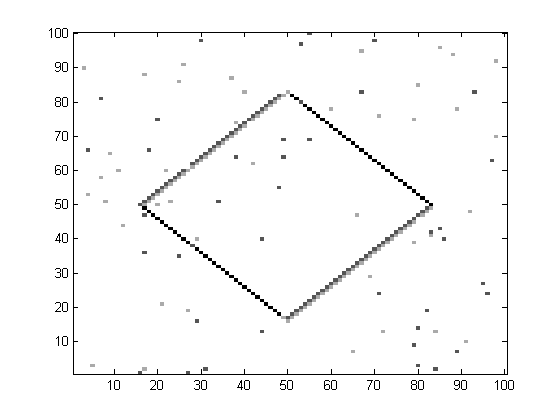}
}%
\subfloat[][h+v, $(4,2)$,\\$\gamma=0.25$, rel.]{%
\includegraphics[width=0.25\textwidth]{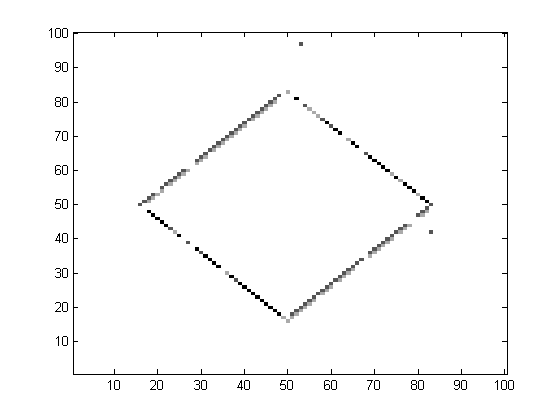}
}%
\subfloat[][h+v, $(4,1)$,\\$\gamma=0.25$, est.]{%
\includegraphics[width=0.25\textwidth]{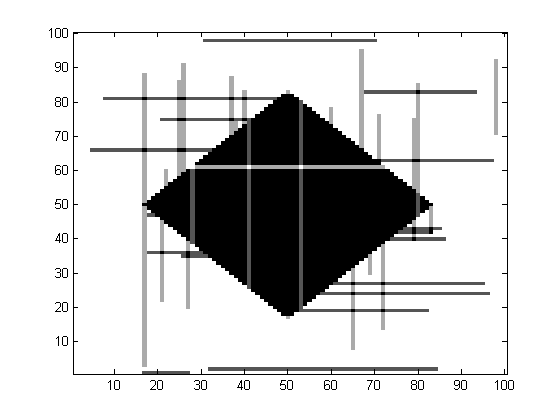}
}%
\subfloat[][h+v, $(4,2)$,\\$\gamma=0.25$, est.]{%
\includegraphics[width=0.25\textwidth]{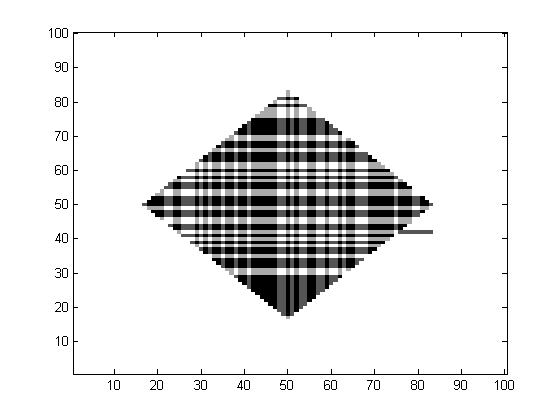}
}%
\\\vspace{-10pt}
\subfloat[][h+v, $(4,1)$,\\$\gamma=0.49$, rel.]{%
\includegraphics[width=0.25\textwidth]{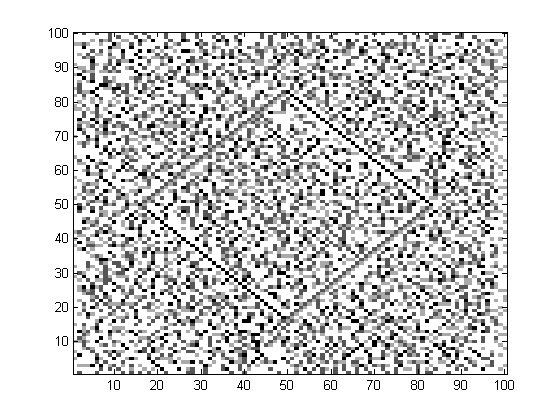}
}%
\subfloat[][h+v, $(4,2)$,\\$\gamma=0.49$, rel.]{%
\includegraphics[width=0.25\textwidth]{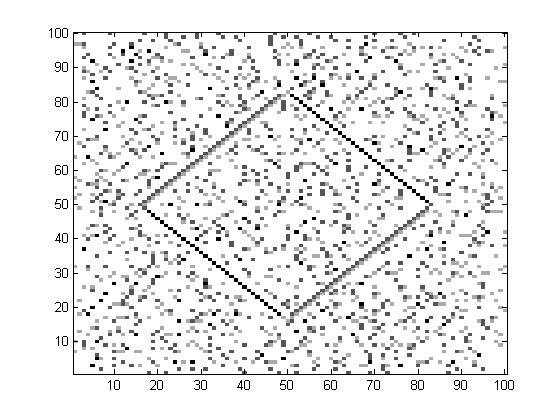}
}%
\subfloat[][h+v, $(4,1)$,\\$\gamma=0.49$, est.]{%
\includegraphics[width=0.25\textwidth]{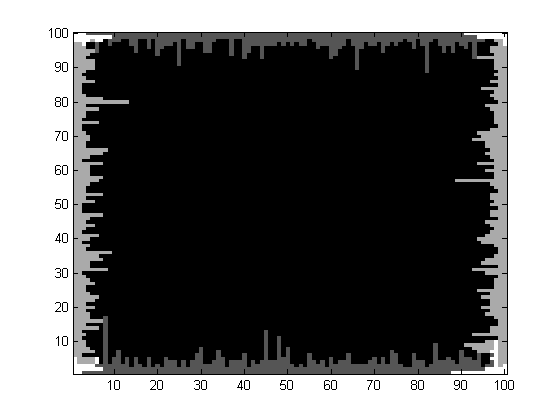}
}%
\subfloat[][h+v, $(4,2)$,\\$\gamma=0.49$, est.]{%
\includegraphics[width=0.25\textwidth]{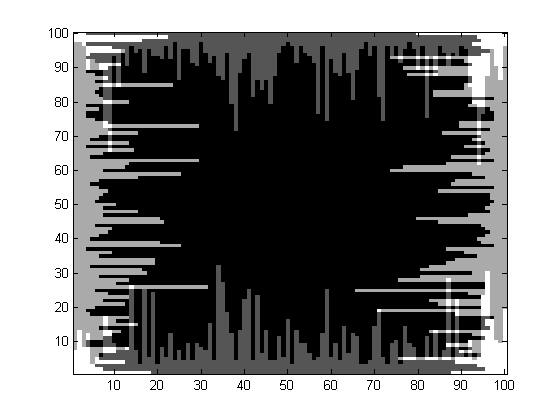}
}%
\caption{The figures show the relevant critical points (rel.) and the corresponding estimates (est.) selected by the horizontal (h) or the horizontal+vertical (h+v) approach using the overlapping ~$(N,Q)$ ~rule and based on a diamond-shaped change set. It is ~$m_k(S)=k+(-1)^k$, $k=1,\ldots,d$ ~and ~$d=1000$ ~with ~$\sigma^2=2$}%
\label{fig:sensitivity2}%
\end{figure}

\newpage
\begin{figure}[H]%
\captionsetup[subfigure]{labelformat=empty}
\centering  
\subfloat[][$(6,1)$, $\gamma=0$]{%
\includegraphics[width=0.3\textwidth]{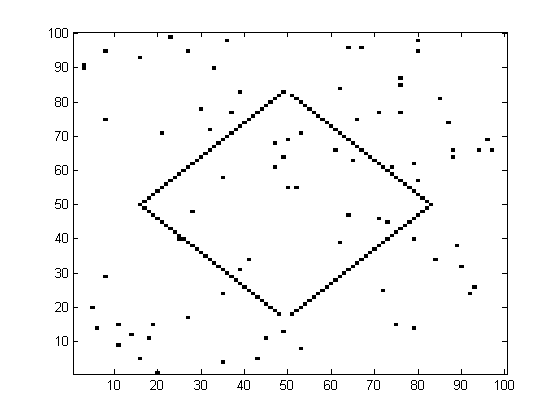}
}%
\subfloat[][$(6,1)$, $\gamma=0.25$]{%
\includegraphics[width=0.3\textwidth]{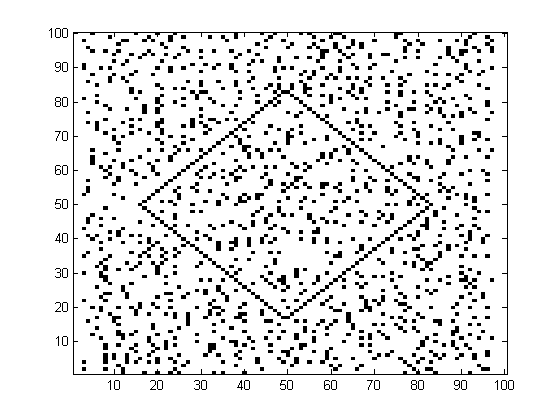}
}%
\subfloat[][$(6,1)$, $\gamma=0.49$]{%
\includegraphics[width=0.3\textwidth]{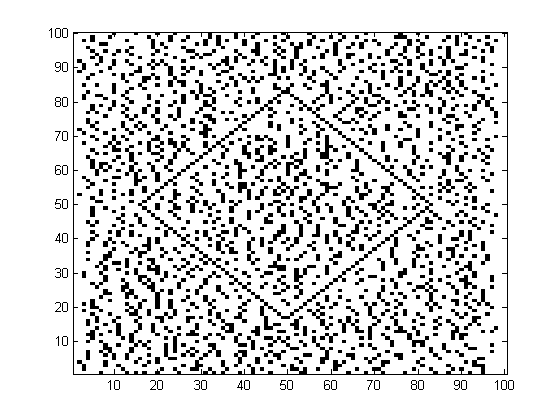}
}%
\\\vspace{-10pt}
\subfloat[][$(6,2)$, $\gamma=0$]{%
\includegraphics[width=0.3\textwidth]{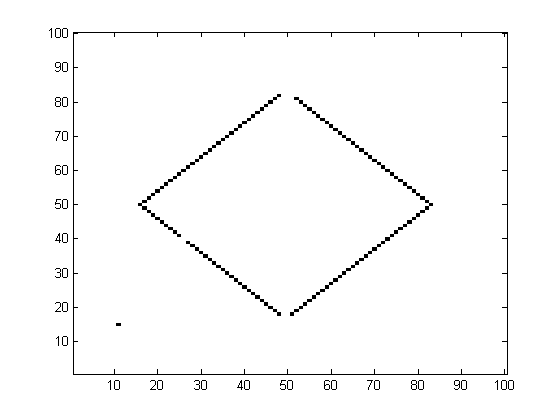}
}%
\subfloat[][$(6,2)$, $\gamma=0.25$]{%
\includegraphics[width=0.3\textwidth]{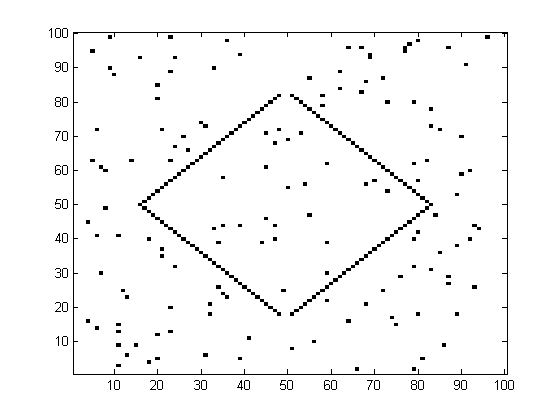}
}%
\subfloat[][$(6,2)$, $\gamma=0.49$]{%
\includegraphics[width=0.3\textwidth]{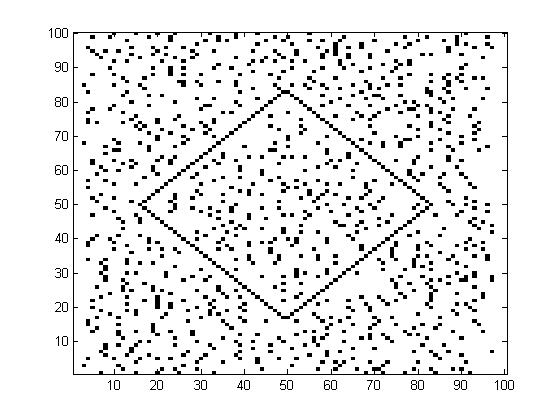}
}%
\\\vspace{-10pt}
\subfloat[][$(6,3)$, $\gamma=0$]{%
\includegraphics[width=0.3\textwidth]{blank.png}
}%
\subfloat[][$(6,3)$, $\gamma=0.25$]{%
\includegraphics[width=0.3\textwidth]{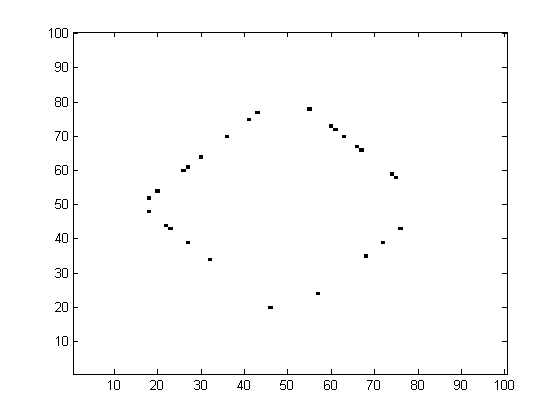}
}%
\subfloat[][$(6,3)$, $\gamma=0.49$]{%
\includegraphics[width=0.3\textwidth]{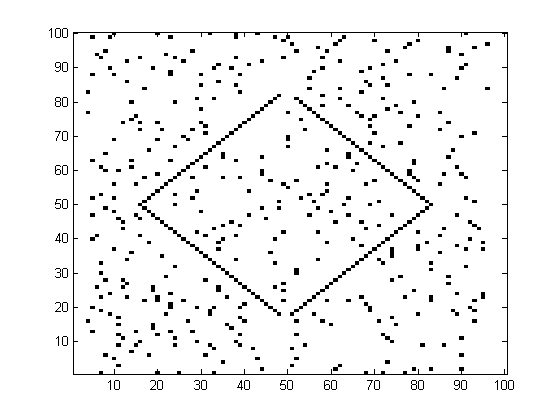}
}%
\\\vspace{-10pt}
\subfloat[][$(6,4)$, $\gamma=0$]{%
\includegraphics[width=0.3\textwidth]{blank.png}
}%
\subfloat[][$(6,4)$, $\gamma=0.25$]{%
\includegraphics[width=0.3\textwidth]{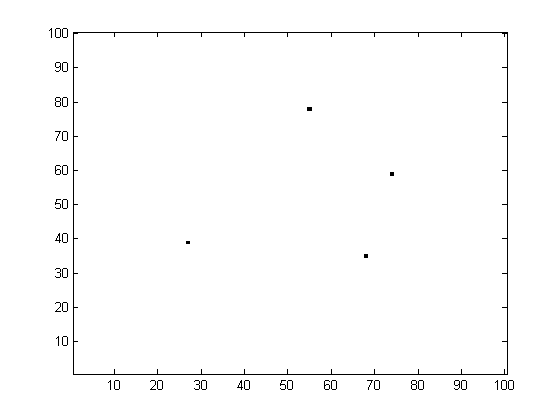}
}%
\subfloat[][$(6,4)$, $\gamma=0.49$]{%
\includegraphics[width=0.3\textwidth]{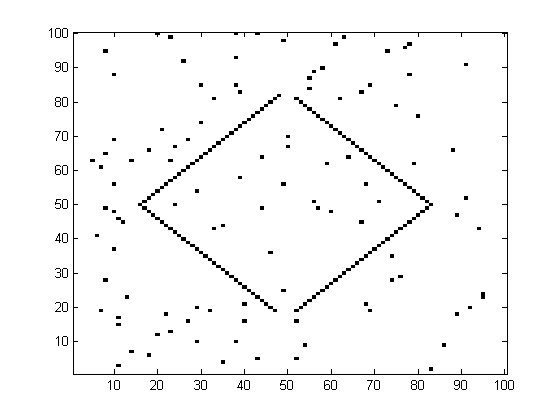}
}%
\caption{The figures show the relevant critical points selected by the horizontal approach using the overlapping ~$(N,Q)$ ~rule and based on a diamond-shaped change set. It is ~$m_k(S)=k+(-1)^k$, $k=1,\ldots,d$ ~and ~$d=1000$ ~with ~$\sigma^2=2$}%
\label{fig:sensitivity3}
\end{figure}

\newpage

\end{document}